\title{Ridge Regression: Structure, Cross-Validation, and Sketching}
\author{Sifan Liu\\
Department of Statistics\\
Stanford University\\
Stanford, CA 94305, USA \\
\texttt{sfliu@stanford.edu} \\
\And
Edgar Dobriban \\
Department of Statistics \\
University of Pennsylvania \\
Philadelphia, PA 19104, USA \\
\texttt{dobriban@wharton.upenn.edu} \\
}
\begin{document}

\maketitle

\begin{abstract}
We study the following three fundamental problems about ridge regression: (1) what is the structure of the estimator? (2) how to correctly use cross-validation to choose the regularization parameter? and (3) how to accelerate computation without losing too much accuracy? We consider the three problems in a unified large-data linear model. We give a precise representation of ridge regression as a covariance matrix-dependent linear combination of the true parameter and the noise. 
We study the bias of $K$-fold cross-validation for choosing the regularization parameter, and propose a simple bias-correction. We analyze the accuracy of primal and dual sketching for ridge regression, showing they are surprisingly accurate. Our results are illustrated by simulations and by analyzing empirical data.
\end{abstract}

\section{Introduction}

Ridge or $\ell_2$-regularized regression is a widely used method for prediction and estimation when the data dimension $p$ is large compared to the number of datapoints $n$. This is especially so in problems with many good features, where sparsity assumptions may not be justified. 
A great deal is known about ridge regression. It is Bayes optimal for any quadratic loss in a Bayesian linear model where the parameters and noise are Gaussian. The asymptotic properties of ridge have been widely studied \citep[e.g.,][etc]{tulino2004random,serdobolskii2007multiparametric,couillet2011random,dicker2014ridge,dobriban2018high}. For choosing the regularization parameter in practice, cross-validation (CV) is widely used. In addition, there is an exact shortcut \citep[e.g.,][p. 243]{friedman2009elements}, which has good consistency properties \citep{hastie2019surprises}. There is also a lot of work on fast approximate algorithms for ridge, e.g., using sketching methods \citep[e.g.,][among others]{alaoui2015fast,chen2015fast,wang2017sketched,chowdhury2018iterative}.


Here we seek to develop a deeper understanding of ridge regression, going beyond existing work in multiple aspects. We work in linear models under a popular asymptotic regime where $n,p\to\infty$ at the same rate \citep{marchenko1967distribution,serdobolskii2007multiparametric,couillet2011random,yao2015large}. In this framework, we develop a fundamental representation for ridge regression, which shows that it is well approximated by a linear scaling of the true parameters perturbed by noise. The scaling matrices are functions of the population-level covariance of the features. As a consequence, we derive formulas for the training error and bias-variance tradeoff of ridge. 

Second, we study commonly used methods for choosing the regularization parameter. Inspired by the observation that CV has a bias for estimating the error rate \citep[e.g.,][p. 243]{friedman2009elements}, we study the bias of CV for selecting the regularization parameter. We discover a surprisingly simple form for the bias, and propose a downward scaling bias correction procedure. Third, we study the accuracy loss of a class of randomized sketching algorithms for ridge regression. These algorithms approximate the sample covariance matrix by sketching or random projection. We show they can be surprisingly accurate, e.g., they can sometimes cut computational cost in half, only incurring 5\% extra error. Even more, they can sometimes improve the MSE if a suboptimal regularization parameter is originally used.


Our work leverages recent results from asymptotic random matrix theory and free probability theory. One challenge in our analysis is to find the limit of the trace $\tr{(\Sigma_1+\Sigma_2^{-1})^{-1}}/p$, where $\Sigma_1$ and $\Sigma_2$ are $p\times p$ independent sample covariance matrices of Gaussian random vectors. The calculation requires nontrivial aspects of freely additive convolutions \citep[e.g.,][]{voiculescu1992free,nica2006lectures}. 


Our work is connected to prior works on ridge regression in high-dimensional statistics \citep{serdobolskii2007multiparametric} and wireless communications \citep{tulino2004random,couillet2011random}. Among other related works, \citet{karoui2011geometric} discuss the implications of the geometric sensitivity of random matrix theory for ridge regression, without considering our problems. \cite{karoui2013asymptotic} and \citet{dicker2014ridge} study ridge regression estimators, but focus only on the risk for identity covariance. \cite{hastie2019surprises} study ``ridgeless'' regression, where the regularization parameter tends to zero.

Sketching is an increasingly popular research topic, see \cite{vempala2005random,halko2011finding,mahoney2011randomized,woodruff2014sketching,drineas2017lectures} and references therein. For sketched ridge regression,  \cite{zhang2013linear,zhang2013recovering} study the dual problem in a complementary finite-sample setting, and their results are hard to compare. \cite{chen2015fast} propose an algorithm combining sparse embedding and the subsampled randomized Hadamard transform (SRHT), proving relative approximation bounds. \cite{wang2017sketching} study iterative sketching algorithms from an optimization point of view, for both the primal and the dual problems.  \cite{dobriban2018new} study sketching using asymptotic random matrix theory, but only for unregularized linear regression. \cite{chowdhury2018iterative} propose a data-dependent algorithm in light of the ridge leverage scores. Other related works include \cite{sarlos2006improved,ailon2006approximate,drineas2006sampling,drineas2011faster,dhillon2013new,ma2015statistical,raskutti2014statistical,gonen2016solving,thanei2017random,ahfock2017statistical,lopes2018error,huang2018near}.

The structure of the paper is as follows: We state our results on representation, risk, and bias-variance tradeoff in Section \ref{sec:ridge}. We study the bias of cross-validation for choosing the regularization parameter in Section \ref{sec: CV}. We study the accuracy of randomized primal and dual sketching for both orthogonal and Gaussian sketches in Section \ref{sec: sketching}.   We provide proofs and additional simulations in the Appendix. Code reproducing the experiments in the paper are available at \url{https://github.com/liusf15/RidgeRegression}.

\section{Ridge regression}
\label{sec:ridge}

We work in the usual linear regression model $Y=X\beta+\ep$, where each row $x_i$ of $X\in\R^{n\times p}$ is a datapoint in $p$ dimensions, and so there are $p$ features. The corresponding element $y_i$ of $Y\in\R^n$ is its continous response (or outcome). We assume mean zero uncorrelated noise, so $\E\ep=0$, and $\Cov{\ep}=\sigma^2I_n$. We estimate the coefficient $\beta\in\R^p$ by ridge regression, solving the optimization problem
\begin{align*}
\hat\beta=\arg\min_{\beta\in\R^p}\frac{1}{n}\|Y-X\beta\|_2^2+\lambda\|\beta\|_2^2,
\end{align*}
where $\lambda>0$ is a regularization parameter. The solution has the closed form
\begin{align*}
\hat\beta=\left(X^\top X/n+\lambda I_p\right)^{-1}X^\top Y/n.
\numberthis\label{equ:solution primal}
\end{align*}
We work in a "big data" asymptotic limit, where both the dimension $p$ and the sample size $n$ tend to infinity, and their aspect ratio converges to a constant, $p/n\rightarrow\gamma\in(0,\infty)$. Our results can be interpreted for any $n$ and $p$, using $\gamma=p/n$ as an approximation. 

We recall that the empirical spectral distribution (ESD) of a $p\times p$ symmetric matrix $\Sigma$ is the distribution $\frac{1}{p}\sum_{i=1}^p \delta_{\lambda_i}$ where $\lambda_i,$ $i=1,\ldots,p$ are the eigenvalues of $\Sigma$, and $\delta_x$ is the point mass at $x$. This is a standard notion in random matrix theory, see e.g., \cite{marchenko1967distribution,tulino2004random,couillet2011random,yao2015large}. The ESD is a convenient tool to summarize all information obtainable from the eigenvalues of a matrix. For instance, the trace of $\Sigma$ is proportional to the \emph{mean} of the distribution, while the condition number is related to the \emph{range of the support}. As is common, we will work in models where there is a sequence of covariance matrices $\Sigma=\Sigma_p$, and their ESDs converges in distribution to a limiting probability distribution. The results become simpler, because they depend only on the limit.

By extension, we say that the ESD of the $n\times p$ matrix $X$ is the ESD of $X^\top X/n$. We will consider some very specific models for the data, assuming it is of the form $X=U\Sigma^{1/2}$, where $U$ has iid entries of zero mean and unit variance. This means that the datapoints, i.e., the rows of $X$, have the form $x_i=\Sigma^{1/2}u_i$, $i=1,\ldots,p$, where $u_i$ have iid entries.  Then $\Sigma$ is the "true" covariance matrix of the features, which is typically not observed. These types of models for the data are very common in random matrix theory, see the references mentioned above. 

Under these models, it is possible to characterize precisely the deviations between the empirical covariance matrix $\hSigma=n^{-1}X^\top X$ and the population covariance matrix $\Sigma$, dating back to the well known classical Marchenko-Pastur law for eigenvectors \citep{marchenko1967distribution}, extended to more general models and made more precise, including results for eigenvectors \citep[see e.g.,][and references therein]{tulino2004random,couillet2011random,yao2015large}. This has been used to study methods for estimating the true covariance matrix, with several applications \citep[e.g.,][]{paul2014random,bun2017cleaning}. More recently, such models have been used to study high dimensional statistical learning problems, including classification and regression \citep[e.g.,][]{zollanvari2013kolmogorov,dobriban2018high}. Our work falls in this line.


We start by finding a precise representation of the ridge estimator. For random vectors $u_n,v_n$ of growing dimension, we say $u_n$ and $v_n$ are \emph{deterministic equivalents}, if for any sequence of fixed (or random and independent of $u_n,v_n$) vectors $w_n$ such that $\lim\sup\|w_n\|_2<\infty$ almost surely, we have $|w_n^\top(u_n - v_n)|\to 0$ almost surely. We denote this by $u_n \asymp v_n$.
Thus linear combinations of $u_n$ are well approximated by those of $v_n$. This is a somewhat non-standard definition, but it turns out that it is precisely the one we need to use prior results from random matrix theory such as from \citep{rubio2011spectral}.

We extend scalar functions $f:\R\to\R$ to matrices in the usual way by functional calculus, applying them to the eigenvalues and keeping the eigenvectors. If $M=V\Lambda V^\top$ is a spectral decomposition of $M$, then we define $f(M):=V f(\Lambda) V^\top$, where $f(\Lambda)$ is the diagonal matrix with entries $f(\Lambda_{ii})$.

For a fixed design matrix $X$, we can write the estimator as 
\begin{align*}
\hat\beta&=(\hSigma+\lambda I_p)^{-1}\hSigma\beta
+(\hSigma+\lambda I_p)^{-1}\frac{X^\top\ep}{n}.
\end{align*}
However, for a random design, we can find a representation that depends on the true covariance $\Sigma$, which may be simpler when $\Sigma$ is simple, e.g., when $\Sigma=I_p$ is isotropic.

\begin{theorem}[Representation of ridge estimator] Suppose the data matrix has the form $X=U\Sigma^{1/2}$, where $U\in\R^{n\times p}$ has iid entries of zero mean, unit variance and finite $8+c$-th moment for some $c>0$, and $\Sigma=\Sigma_{p}\in\R^{p\times p}$ is a deterministic positive definite matrix. Suppose that $n,p\to\infty$ with $p/n\to\gamma>0$.
Suppose the ESD of the sequence of $\Sigma$s converges in distribution to a probability measure with compact support bounded away from the origin.  Suppose that the noise is Gaussian, and that $\beta=\beta_{p}$ is an arbitrary sequence of deterministic vectors, such that $\lim\sup \|\beta\|_2<\infty$. 

Then the ridge regression estimator is asymptotically equivalent to a random vector with the following representation: 
\begin{align*}
\hat\beta(\lambda)&
\asymp A(\Sigma,\lambda)\cdot \beta
+B(\Sigma,\lambda) \cdot\sigma\cdot  \frac{Z}{p^{1/2}}.
\end{align*}
Here $Z\sim \N(0,I_p)$ is a random vector that is stochastically dependent only on the noise $\ep$, and $A,B$ are deterministic matrices defined by applying the scalar functions below to $\Sigma$:
\begin{align*}
A(x,\lambda) =
(c_p x+\lambda)^{-2}(c_p+c_p')x,
\,\,\,\,\,\,\,\,\,\,\,\,\,\,
\qquad
B(x,\lambda)
=
 (c_p x+\lambda)^{-1} c_p x.
\end{align*}
Here $c_p:=c(n,p,\Sigma,\lambda)$ is the unique positive solution of the fixed point equation 
\beq
1-c_p=\frac{c_p}{n}\tr\left[\Sigma(c_p\Sigma+\lambda I)^{-1}\right]. 
\label{cp}
\eeq
\label{thm:rep}
\end{theorem}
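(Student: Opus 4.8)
The plan is to work from the fixed-design decomposition already displayed, $\hat\beta = (\hSigma+\lambda I)^{-1}\hSigma\beta + (\hSigma+\lambda I)^{-1}X^\top\ep/n$, and to analyze the two summands separately, treating the first as a signal term that is deterministic given $X$ and the second as a conditionally Gaussian noise term. The engine for both is a deterministic equivalent for the resolvent $Q(\lambda):=(\hSigma+\lambda I)^{-1}$. Under the stated hypotheses (iid entries with $8+c$ moments, $\Sigma$ with limiting ESD of compact support bounded away from the origin, $p/n\to\gamma$), the Rubio--Mestre theorem applies, and crucially its conclusion is exactly an almost-sure statement about bilinear and quadratic forms, which is precisely what the paper's nonstandard $\asymp$ encodes. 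First I would record $Q(\lambda)\asymp(c_p\Sigma+\lambda I)^{-1}$, where $c_p$ is pinned down by the self-consistency of the equivalent; the resulting scalar equation is \eqref{cp}, and uniqueness of the positive root follows from monotonicity of the associated Stieltjes-type map.

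For the signal term I would use the algebraic identity $Q\hSigma = I-\lambda Q$, so that $w^\top(\hSigma+\lambda I)^{-1}\hSigma\beta \asymp w^\top\bigl(I-\lambda(c_p\Sigma+\lambda I)^{-1}\bigr)\beta$; the coefficient of $\beta$ is then read off by functional calculus on $\Sigma$. Producing the precise stated form, in which the derivative $c_p'=\partial c_p/\partial\lambda$ appears, forces a second and more delicate ingredient: a refined equivalent obtained by differentiating the resolvent equivalent in $\lambda$. I would justify that $Q'(\lambda)\asymp \partial_\lambda(c_p\Sigma+\lambda I)^{-1}=-(c_p\Sigma+\lambda I)^{-2}(c_p'\Sigma+I)$ by a normal-families / Vitali argument: both sides are analytic in $\lambda$ on the positive half-line, they agree there as functions, and uniform boundedness on compacta upgrades convergence of the functions to convergence of their derivatives. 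Differentiating \eqref{cp} then yields a linear equation that determines $c_p'$ explicitly.

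For the noise term I would condition on $X$. Since $\ep$ is Gaussian and independent of $X$, the vector $(\hSigma+\lambda I)^{-1}X^\top\ep/n$ is, given $X$, mean-zero Gaussian with covariance $(\sigma^2/n)\,Q\hSigma Q$. I would therefore define $Z$ to be the standardized Gaussian driving this vector, a measurable function of $\ep$ alone, so that the noise term equals $\sigma\,n^{-1/2}(Q\hSigma Q)^{1/2}Z$ exactly, and then replace $Q\hSigma Q$ by its deterministic equivalent. The key reduction is $Q\hSigma Q = Q+\lambda Q'$, which turns the quadratic resolvent into a combination of $Q$ and its $\lambda$-derivative already controlled in the previous step; this is where the $c_p'$-dependence enters, after converting the $1/n$ scaling to $1/p$ via $p/n\to\gamma$. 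Matching then identifies the scalar coefficient and shows $Z\sim\N(0,I_p)$ at the level of bilinear forms.

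The main obstacle, I expect, is the quadratic resolvent term and the differentiation step it rests on: deterministic equivalence is \emph{not} preserved under general matrix products, so the identity $Q\hSigma Q=Q+\lambda Q'$ together with the analyticity argument for $Q'\asymp\bar Q'$ is doing the real work, and one must verify the requisite uniform operator-norm bounds (using that the ESD support is bounded away from $0$ and that $\lambda>0$) to license passing derivatives through the limit. A secondary but necessary point is upgrading the in-expectation and trace identities supplied by random matrix theory to the almost-sure, per-direction statements demanded by $\asymp$; this I would handle by concentration of quadratic forms in the entries of $U$ and in the Gaussian $\ep$, combined with the $8+c$ moment assumption and a Borel--Cantelli argument along the sequence $n,p\to\infty$.
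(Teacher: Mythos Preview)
Your plan is essentially the paper's: the same signal/noise decomposition, the same Rubio--Mestre equivalent $Q\asymp(c_p\Sigma+\lambda I)^{-1}$, the same $Q\hSigma=I-\lambda Q$ reduction for the signal, and the same need for a second-order resolvent equivalent for the noise (your identity $Q\hSigma Q=Q+\lambda Q'$ is exactly the paper's decomposition $h(x)=(x+\lambda)^{-1}-\lambda(x+\lambda)^{-2}$). The only difference is packaging: the paper imports the differentiation rule $(\hSigma+\lambda I)^{-2}\asymp(c_p\Sigma+\lambda I)^{-2}(I-c_p'\Sigma)$ as a black box from \cite{dobriban2019one}, whereas you propose to re-derive it via a Vitali/normal-families argument---which is in fact how that rule is established there, so you are on the same track.

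Two small flags so your formulas land correctly. First, in the paper $c_p'$ is the derivative with respect to $z:=-\lambda$, not $\lambda$, so your $\partial_\lambda(c_p\Sigma+\lambda I)^{-1}=-(c_p\Sigma+\lambda I)^{-2}(c_p'\Sigma+I)$ picks up a sign on the $c_p'\Sigma$ piece. Second, in the paper's own proof the signal term stops at $c_p\Sigma(c_p\Sigma+\lambda I)^{-1}\beta$ with no $c_p'$; the derivative enters only in the noise term, so do not try to force $c_p'$ into the signal coefficient---the apparent mismatch you noticed with the stated $A(x,\lambda)$ is a labeling inconsistency between the theorem statement and its proof, not a gap in your argument.
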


This result gives a precise representation of the ridge regression estimator. It is a sum of two terms: the true coefficient vector $\beta$ scaled by the matrix $A(\Sigma,\lambda)$, and the noise vector $Z$ scaled by the matrix $B(\Sigma,\lambda)$. The first term captures to what extent ridge regression recovers the "signal". Morever, the noise term $Z$ is directly coupled with the noise in the original regression problem, and thus also the estimator. The result would not hold for an independent noise vector $Z$.

However, the coefficients are not fully explicit, as they depend on the unknown population covariance matrix $\Sigma$, as well as on the fixed-point variable $c_p$. 

Some comments are in order:
\begin{compactenum}
\item {\bf Structure of the proof.} The proof is quite non-elementary and relies on random matrix theory. Specifically, it uses the language of the recently developed "calculus of deterministic equivalents" \citep{dobriban2018understanding}, and results by \citep{rubio2011spectral}. A general takeaway is that for $n$ not much larger than $p$, the empirical covariance matrix $\hSigma$ is \emph{not} a good estimator of the true covariance matrix $\Sigma$. However, the deviation of linear functionals of $\hSigma$, can be quantified. In particular, we have
\begin{align*}
(\hSigma+ \lambda I)^{-1} &\asymp (c_p \Sigma + \lambda I)^{-1},
\end{align*}
in the sense that linear combinations of the entries of the two matrices are close (see the proof for more details).

\item {\bf Understanding the resolvent bias factor $c_p$.}  Thus, $c_p$ can be viewed as a \emph{resolvent bias factor}, which tells us by what factor $\Sigma$ is multiplied when evaluating the resolvent $(\hSigma+ \lambda I)^{-1}$, and comparing it to its naive counterpart $( \Sigma + \lambda I)^{-1}$. It is known that $c_p$ is well defined, and this follows by a simple monotonicity argument, see \cite{hachem2007deterministic,rubio2011spectral}. Specifically, the left hand side of \eqref{cp} is decreasing in $c_p$, while the right hand size is increasing in

Also $c_p'$ is the derivative of $c_p$, when viewing it as a function of $z:=-\lambda$. An explicit expression is provided in the proof in Section \ref{prf: rep}, but is not crucial right now.

\end{compactenum}

\begin{figure}
\centering
\includegraphics[width=.8\textwidth]{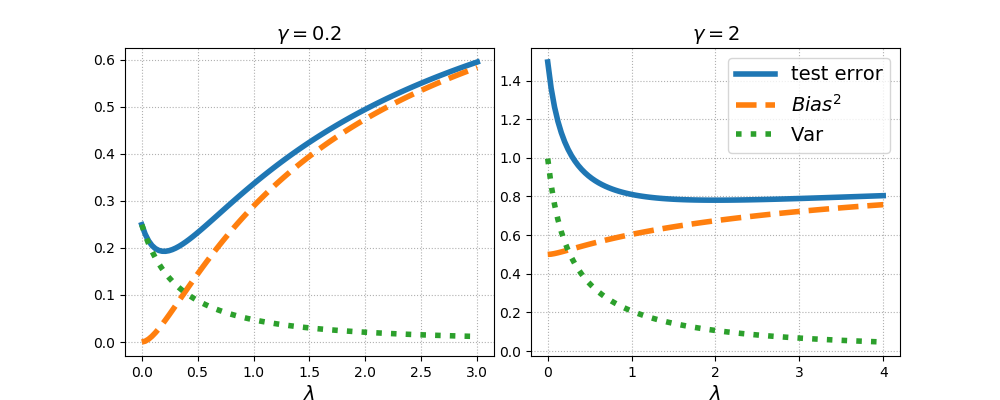}
\caption{Ridge regression bias-variance tradeoff. Left: $\gamma=p/n=0.2$; right: $\gamma=2$. The data matrix $X$ has iid Gaussian entries. The coefficient $\beta$ has distribution $\beta\sim \N(0,I_p/p)$, while the noise $\ep\sim \N(0,I_p)$. }
\label{Fig: ridge bias-var tradeoff}
\end{figure}
Here we discuss some implications of this representation. 

{\bf For uncorrelated features, $\Sigma=I_p$, $A,B$ reduce to multiplication by scalars.} Hence, each coordinate of the ridge regression estimator is simply a scalar multiple of the corresponding coordinate of $\beta$. One can use this to find the bias in each individual coordinate.

{\bf Training error and optimal regularization parameter.} This theorem has implications for understanding the training error, and optimal regularization parameter of ridge regression. As it stands, the theorem itself only characterizes the behavior og \emph{linear combinations}  of the coordinates of the estimator. Thus, it can be directly applied to study the bias $\E \hbeta(\lambda)-\beta$ of the estimator. However, it cannot directly be used to study the variance; as that would require understanding \emph{quadratic functionals} of the estimator. This seems to require significant advances in random matrix theory, going beyond the results of \cite{rubio2011spectral}. However, we show below that with additional assumptions on the structure of the parameter $\beta$, we can derive the MSE of the estimator in other ways. 

We work in a \emph{random-effects model}, where the $p$-dimensional regression parameter $\beta$ is random, each coefficient has zero mean $\E\beta_i=0$, and is normalized so that $\V{\beta_i}=\alpha^2/p$. This ensures that the signal strength $\E\|\beta\|^2=\alpha^2$ is fixed for any $p$. The asymptotically optimal $\lambda$ in this setting is always $\lambda^* = \gamma\sigma^2/\alpha^2$ see e.g., \cite{tulino2004random,dicker2014ridge,dobriban2018high}. The ridge regression estimator with $\lambda = p\sigma^2/(n\alpha^2)$ is the posterior mean of $\beta$, when $\beta$ and $\ep$ are normal random variables. 

For a distribution $F$, we define the quantities 
$$\theta_i(\lambda)=\int\frac{1}{(x+\lambda)^{i}}dF_{\gamma}(x),$$
$(i=1,2,\ldots)$. These are the moments of the resolvent and its derivatives (up to constants).   We use the following loss functions: mean squared estimation error: $M(\hbeta)=\E\|\hbeta-\beta\|_2^2$, and residual or training error: $R(\hbeta)=\EE\|Y-X\hbeta\|_2^2$. 

\begin{theorem}[MSE and training error of ridge] Suppose $\beta$ has iid entries with $\E{\beta_i}=0$, $\Var{\beta_i}=\alpha^2/p$, $i=1,\ldots,p$ and $\beta$ is independent of $X$ and $\ep$. Suppose $X$ is an arbitrary $n\times p$ matrix depending on $n$ and $p$, and the ESD of $X$ converges weakly to a deterministic distribution $F$ as $n,p\rightarrow\infty$ and $p/n\rightarrow\gamma$. Then the asymptotic MSE and residual error of the ridge regression estimator $\hat\beta(\lambda)$ has the form
\begin{align*}
\limn M(\hat\beta(\lambda))&=\alpha^2\lambda^2\theta_2+\gamma\sigma^2[\theta_1-\lambda\theta_2],
\numberthis\label{eq:original_amse}\\
\limn R(\hat\beta(\lambda))&=\alpha^2\lambda^2[\theta_1-\lambda\theta_2]
+\sigma^2
\left[1-\gamma+\gamma\lambda^2\theta_2)
\right],\numberthis\label{eq:original_residual}
\end{align*}
\label{thm: MSE ridge}
\end{theorem}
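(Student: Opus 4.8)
The plan is to sidestep the random-matrix-theory machinery behind Theorem~\ref{thm:rep} and instead exploit the random-effects structure directly: because $X$ is held fixed and the only randomness is in $\beta$ and $\ep$, both $M(\hbeta)$ and $R(\hbeta)$ are \emph{exact} finite-sample expectations that collapse to normalized traces of resolvents of $\hSigma$, and the sole limiting input is weak convergence of the ESD. First I would rewrite the estimation error using \eqref{equ:solution primal} and the identity $(\hSigma+\lambda I)^{-1}\hSigma-I=-\lambda(\hSigma+\lambda I)^{-1}$ as
\begin{align*}
\hbeta-\beta=-\lambda(\hSigma+\lambda I)^{-1}\beta+(\hSigma+\lambda I)^{-1}\frac{X^\top\ep}{n}.
\end{align*}
Since $\beta\perp\ep$ and both have mean zero, the two summands are uncorrelated, so $M(\hbeta)$ splits cleanly into a signal part and a noise part with no cross term.

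For the signal part I would take the expectation over $\beta$ using $\E[\beta\beta^\top]=(\alpha^2/p)I_p$, giving $\lambda^2(\alpha^2/p)\tr[(\hSigma+\lambda I)^{-2}]$; for the noise part I would take the expectation over $\ep$ using $\E[\ep\ep^\top]=\sigma^2 I_n$ together with $X^\top X=n\hSigma$, giving $(\sigma^2/n)\tr[\hSigma(\hSigma+\lambda I)^{-2}]$. The only asymptotic step is that, because $\lambda>0$, each function $x\mapsto(x+\lambda)^{-i}$ is bounded and continuous on $[0,\infty)$, so weak convergence of the ESD of $\hSigma$ to $F$ yields $p^{-1}\tr[(\hSigma+\lambda I)^{-i}]\to\theta_i$. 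Combining this with $p/n\to\gamma$ and the elementary splitting $x/(x+\lambda)^2=(x+\lambda)^{-1}-\lambda(x+\lambda)^{-2}$, which turns the noise trace into $\theta_1-\lambda\theta_2$, produces \eqref{eq:original_amse}.

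For the training error I would write $Y-X\hbeta=\lambda X(\hSigma+\lambda I)^{-1}\beta+(I_n-H)\ep$ with the ridge ``hat'' matrix $H:=n^{-1}X(\hSigma+\lambda I)^{-1}X^\top$, and again use $\beta\perp\ep$ to separate signal and noise. The signal part reduces, via $X^\top X=n\hSigma$, to $\alpha^2\lambda^2 p^{-1}\tr[\hSigma(\hSigma+\lambda I)^{-2}]\to\alpha^2\lambda^2(\theta_1-\lambda\theta_2)$. For the noise part the key is the exact identity
\begin{align*}
\tr[(I_n-H)^2]=n-2\tr H+\tr H^2=n-p+\lambda^2\sum_{i=1}^p(\hat\lambda_i+\lambda)^{-2},
\end{align*}
obtained by expanding and repeatedly substituting $X^\top X=n\hSigma$ (equivalently, when $p\le n$, by noting that each eigenvalue $\hat\lambda_i$ of $\hSigma$ contributes an eigenvalue $\lambda/(\hat\lambda_i+\lambda)$ to $I_n-H$, leaving a unit eigenvalue of multiplicity $n-p$). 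Dividing by $n$ and passing to the limit gives $\sigma^2(1-\gamma+\gamma\lambda^2\theta_2)$, hence \eqref{eq:original_residual}.

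There is no deep analytic obstacle here --- unlike Theorem~\ref{thm:rep}, the identities are exact and no deterministic-equivalent calculus is needed --- so the effort is concentrated in the bookkeeping for the residual error. Deriving the decomposition $Y-X\hbeta=\lambda X(\hSigma+\lambda I)^{-1}\beta+(I_n-H)\ep$ correctly and tracking the $n-p$ unit eigenvalues of $I_n-H$ (the origin of the $1-\gamma$ term) is the step most prone to sign and index errors, and I would guard against it by checking the $\lambda\to\infty$ limit, where $\hbeta\to0$ forces $M\to\alpha^2$, matching $\alpha^2\lambda^2\theta_2\to\alpha^2$ while the noise term vanishes. One should also confirm at the outset that the $F_\gamma$ defining $\theta_i$ is exactly the limiting ESD $F$ of $\hSigma$, and that both right-hand sides are $O(1)$, which requires the $1/n$ normalization implicit in $R$.
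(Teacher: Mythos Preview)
Your proposal is correct and follows essentially the same route as the paper's own proof: decompose $\hbeta-\beta$ into the $-\lambda(\hSigma+\lambda I)^{-1}\beta$ and $(\hSigma+\lambda I)^{-1}X^\top\ep/n$ pieces, take expectations over $\beta$ and $\ep$ to reduce everything to normalized traces of resolvents of $\hSigma$, and pass to the limit via weak convergence of the ESD. The only cosmetic difference is in the residual noise term: the paper expands on the $p$-side via $\tr[(I_p-\lambda(\hSigma+\lambda I)^{-1})^2]\to 1-2\lambda\theta_1+\lambda^2\theta_2$ and then simplifies $1-2\gamma(1-\lambda\theta_1)+\gamma(1-2\lambda\theta_1+\lambda^2\theta_2)$ to $1-\gamma+\gamma\lambda^2\theta_2$, whereas you work on the $n$-side with the hat matrix and the eigenvalue identity $\tr[(I_n-H)^2]=n-p+\lambda^2\sum_i(\hat\lambda_i+\lambda)^{-2}$; the two are equivalent by cyclicity of the trace, and your identity in fact holds for all $n,p$ (not only $p\le n$) once one expands $n-2\tr H+\tr H^2$ and substitutes $\hat\lambda_i/(\hat\lambda_i+\lambda)=1-\lambda/(\hat\lambda_i+\lambda)$.
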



{\bf Bias-variance tradeoff.} Building on this, we can also study the bias-variance tradeoff of ridge regression. Qualitatively, large $\lambda$ leads to more regularization, and decreases the variance. However, it also increases the bias. Our theory allows us to find the explicit formulas for the bias and variance as a function of $\lambda$. See Figure \ref{Fig: ridge bias-var tradeoff} for a plot and Sec. \ref{bv} for the details. As far as we know, this is one of the few examples of high-dimensional asymptotic problems where the precise form of the bias and variance can be evaluated.

{\bf Bias-variance tradeoff at optimal $\lambda^*=\gamma\sigma^2/\alpha^2$.} (see Figure \ref{fig: bias var optimal lambda}) This can be viewed as the "pure" effect of dimensionality on the problem, keeping all other parameters fixed, and has intriguing properties. The variance first increases, then decreases  with $\gamma$. In the "classical" low-dimensional case, most of the risk is due to variance, while in the "modern" high-dimensional case, most of it is due to bias. This is consistent with other phenomena in proportional-limit asymptotics, e.g., that the map between population and sample eigenvalue distributions is asymptotically deterministic \citep{marchenko1967distribution}. 

{\bf Future applications.} This fundamental representation may have applications to important statistical inference questions. For instance, inference on the regression coefficient $\beta$ and the noise variance  $\sigma^2$ are important and challenging problems. Can we use our representation to develop debiasing techniques for this task? This will be interesting to explore in future work.




\section{Cross-validation}
\label{sec: CV}

How can we choose the regularization parameter? In practice, cross-validation (CV) is the most popular approach.  However, it is well known that CV has a bias for estimating the error rate, because it uses a smaller number of samples than the full data size \citep[e.g.,][p. 243]{friedman2009elements}. In this section, we study related questions, proposing a bias-correction method for the optimal regularization parameter. This is closely connected to the previous section, because it relies on the same random-effects theoretical framework. In fact, our conclusions here are a direct consequence of the properties of that framework.

{\bf Setup.} Suppose we split the  $n$ datapoints (samples) into $K$ equal-sized subsets, each containing $n_0=n/K$ samples. We use the $k$-th subset $(X_k,Y_k)$ as the validation set and the other $K-1$ subsets $\smash{(X_{-k},Y_{-k})}$, with total sample size $n_1  = \smash{(K-1)n/K}$ as the training set. We find the ridge regression estimator $\smash{\hat\beta_{-k}}$, i.e.
\begin{align*}
\hat\beta_{-k}(\lambda)=\left(X_{-k}^\top X_{-k}+n_1 \lambda I_p\right)^{-1}X_{-k}^\top Y_{-k}.
\end{align*}
The expected cross-validation error is, for isotropic covariance, i.e., $\Sigma = I$,
\begin{align*}
CV(\lambda)
=\E\widehat{CV}(\lambda)
=\EE{\frac{1}{K}\sum_{k=1}^K\|Y_{k}-X_k\hat\beta_{-k}(\lambda)\|_2^2/n_0}
=\sigma^2+\EE{\|\hat\beta_{-k}-\beta\|_2^2}.
\end{align*} 
{\bf Bias in CV.} When $n,p$ tend to infinity so that $p/n\to\gamma>0$, and in the random effects model with $\E\beta_i=0$, $\V{\beta_i}=\alpha^2/p$ described above, the minimizer of $CV(\lambda)$ tends to $\lambda_k^*=\tilde{\gamma}\sigma^2/\alpha^2$, where $\tilde{\gamma}$ is the limiting aspect ratio of $X_{-k}$, i.e. $\tilde{\gamma}=\gamma K/(K-1)$. Since the aspect ratios of $X_{-k}$ and $X$ differ, the limiting minimizer of the cross-validation estimator of the test error is biased for the limiting minimizer of the actual test error, which is $\lambda^* = \gamma\sigma^2/\alpha^2$. 


{\bf Bias-correction.} Suppose we have found $\hat\lambda_k^*$, the minimizer of $\widehat{CV}(\lambda)$. Afterwards, we usually refit ridge regression on the entire dataset, i.e., find
\begin{align*}
\hat{\beta}(\hat\lambda^*)=(X^\top X+\hat\lambda^* nI)^{-1}X^\top Y.
\end{align*}

Based on our bias calculation, we propose to use a \emph{bias-corrected} parameter 
$$\hat\lambda^* := \hat\lambda_k^* \frac{K-1}{K}.$$
So if we use 5 folds, we should multiply the CV-optimal $\lambda$ by 0.8. We find it surprising that this theoretically justified bias-correction does not depend on any unknown parameters, such as $\beta,\alpha^2,\sigma^2$.
 While the bias of CV is widely known, we are not aware that this bias-correction for the regularization parameter has been proposed before.

{\bf Numerical examples.} Figure \ref{Fig: CV_MSD} shows on two empirical data examples that the debiased estimator gets closer to the optimal $\lambda$ than the original minimizer of the CV. However, in this case it does not significantly improve the test error. Simulation results in Section \ref{cvs} also show that the bias-correction correctly shrinks the regularization parameter and decreases the test error. We also consider examples where $p\gg n$ (i.e., $\gamma\gg1$), because this is a setting where it is known that the bias of CV can be large \citep{tibshirani2009bias}. However, in this case, we do not see a significant improvement.


\begin{figure}
\centering
\begin{subfigure}{.45\textwidth}
\includegraphics[scale=0.4]{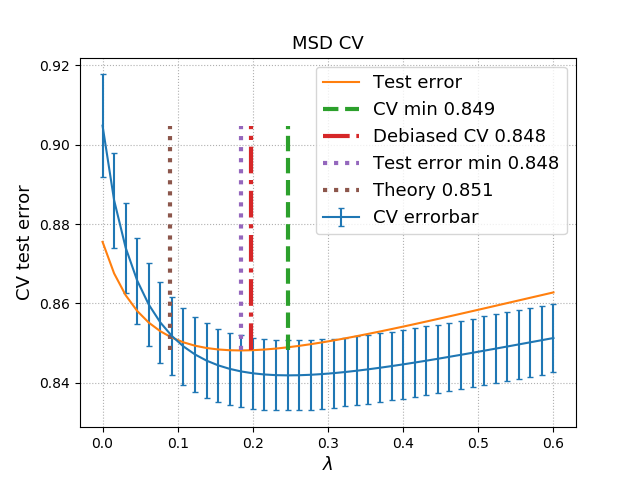}

\end{subfigure}
\begin{subfigure}{.45\textwidth}
\includegraphics[scale=0.4]{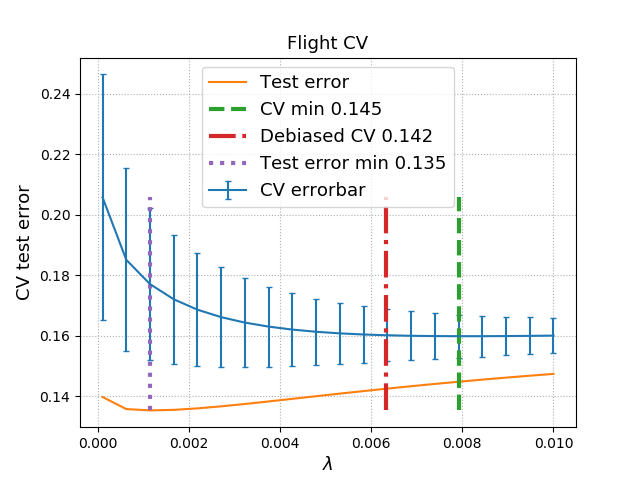}
\end{subfigure}
\caption{Left: Cross-validation on the Million Song Dataset \cite[MSD,][]{Bertin-Mahieux2011}. For the error bar, we take $n=1000$, $p=90$, $K=5$, and average over 90 different sub-datasets. For the test error, we train on 1000 training datapoints and fit on 9000 test datapoints. The debiased $\lambda$ reduces the test error by 0.00024, and the minimal test error is 0.8480. Right: Cross-validation on the flights dataset \cite{nycflights13}. For the error bar, we take $n=300,p=21,K=5$, and average over 180 different sub-datasets. For the test error, we train on 300 datapoints and fit on 27000 test datapoints. The debiased $\lambda$ reduces the test error by 0.0022, and the minimal test error is 0.1353.}
\label{Fig: CV_MSD}
\end{figure}


{\bf Extensions.} The same bias-correction idea also applies to train-test validation. In addition, there is a special fast ``short-cut'' for leave-one-out cross-validation in ridge regression \citep[e.g.,][p. 243]{friedman2009elements}, which has the same cost as one ridge regression. The minimizer converges to $\lambda^*$ \citep{hastie2019surprises}. However, we think that the bias-correction idea is still valuable, as the idea applies beyond ridge regression: CV selects regularization parameters that are too large. See Section \ref{cvd} for more details and experiments comparing different ways of choosing the regularization parameter.

\section{Sketching}
\label{sec: sketching}
A final important question about ridge regression is how to compute it in practice. In this section, we study that problem \emph{in the same high-dimensional model} used throughout our paper.
The computation complexity of ridge regression, $O(np$ $\min(n,p))$, can be intractable in modern large-scale data analysis. Sketching is a popular approach to reducing the time complexity by reducing the sample size and/or dimension, usually by random projection or sampling \cite[e.g.][]{mahoney2011randomized,woodruff2014sketching,drineas2016randnla}. Specifically, \emph{primal sketching} approximates the sample covariance matrix $X^\top X/n$ by $X^\top L^\top LX/n$, where $L$ is an $m\times n$ sketching matrix, and $m<n$. If $L$ is chosen as a suitable random matrix, then this can still approximate the original sample covariance matrix. Then the primal sketched ridge regression estimator is
\begin{align*}
\hat\beta_p&=\left(X^\top L^\top LX/n+\lambda I_p\right)^{-1}X^\top Y/n.
\numberthis\label{equ:primal sketch}
\end{align*}
\emph{Dual sketching} reduces $p$ instead. An equivalent expression for ridge regression is $\hat{\beta}$ = $\smash{n^{-1}X^\top\left(XX^\top/n+\lambda I_n\right)^{-1}Y}$. Dual sketched ridge regression reduces the computation cost of the Gram matrix $XX^\top$, approximating it by $XRR^\top X^\top$ for another sketching matrix $R\in\R^{p\times d}$ ($d<p$), so
\begin{align*}
\hat\beta_d&=X^\top\left(XRR^\top X^\top/n+\lambda I_n\right)^{-1}Y/n.
\numberthis\label{equ:dual sketch}
\end{align*}

The sketching matrices $R$ and $L$ are usually chosen as random matrices with iid entries (e.g., Gaussian ones) or as orthogonal matrices. In this section, we study the asymptotic MSE for both orthogonal (Section \ref{sec: ortho sketching}) and Gaussian sketching (Section \ref{sec: gaussian sketching}). We also mention \emph{full sketching}, which performs ridge after projecting down both $X$ and $Y$. In section \ref{fs}, we find its MSE. However, the other two methods have better tradeoffs, and we can empirically get better results for the same computational cost.

\subsection{Orthogonal sketching}
\label{sec: ortho sketching}

First we consider primal sketching with orthogonal projections. These can be implemented by subsampling, Haar distributed matrices, or subsampled randomized Hadamard transforms \citep{sarlos2006improved}. We recall that the standard \emph{Marchenko-Pastur (MP) law} is the probability distribution which is the limit of the ESD of $X^\top X/n$, when the $n\times p$ matrix $X$ has iid standard Gaussian entries, and $n,p\to\infty$ so that $p/n\to\gamma>0$, which has an explicit density \citep{marchenko1967distribution,bai2009spectral}. 

\begin{theorem}[Primal orthogonal sketching] Suppose $\beta$ has iid entries with $\E{\beta_i}=0$, $\Var{\beta_i}$ $=\alpha^2/p$, $i=1,\ldots,p$ and $\beta$ is independent of $X$ and $\ep$. Suppose $X$ has iid standard normal entries.

We compute primal sketched ridge regression \eqref{equ:primal sketch} with an $m\times n$ orthogonal matrix $L$ ($m<n$, $LL^\top=I_m$). Let $n, p$ and $m$ tend to infinity with $p/n\rightarrow\gamma\in(0,\infty)$ and $m/n\rightarrow\xi\in(0,1)$. Then the MSE of $\hat\beta_p(\lambda)$ has the limit
\begin{align*}
M(\lambda)=\alpha^2\frac{\left[(\lambda+\xi-1)^2+\gamma(1-\xi)\right]
\theta_2\left(\frac{\gamma}{\xi},\frac{\lambda}{\xi}\right)}{\xi^2}
+\gamma\sigma^2
\frac{\xi\theta_1\left(\frac{\gamma}{\xi},\frac{\lambda}{\xi}\right)
-(\lambda+\xi-1)\theta_2\left(\frac{\gamma}{\xi},\frac{\lambda}{\xi}\right)}
{\xi^2},\numberthis\label{equ: mse primal orthogonal}
\end{align*}
where $\theta_i(\gamma,\lambda)=\int(x+\lambda)^{-i}dF_{\gamma}(x)$ and $F_\gamma$ is the standard Marchenko-Pastur law with aspect ratio $\gamma$.
\label{thm: primal}
\end{theorem}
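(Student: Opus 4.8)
The plan is to exploit the rotational invariance of the Gaussian design to \emph{decouple} the sketched and the full sample covariance matrices, after which the MSE collapses to a handful of normalized traces that are evaluated by the Marchenko--Pastur law together with one explicit Wishart second-moment computation. Since $X$ has iid standard normal entries and $L$ is orthogonal with $LL^\top=I_m$, I would write $L=[I_m,0]Q$ for some $Q\in O(n)$; because $QX\stackrel{d}{=}X$ and $X^\top X=(QX)^\top(QX)$, I may assume $L=[I_m,0]$. Partitioning $X=[X_1;X_2]$ into its first $m$ and last $n-m$ rows and setting $A:=X_1^\top X_1/n=X^\top L^\top L X/n$ and $B:=X_2^\top X_2/n$, the sketched covariance is $A$, the full covariance is $\hSigma=A+B$, and crucially $A$ and $B$ are \emph{independent}. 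With $G:=(A+\lambda I)^{-1}$, the algebraic identity $G\hSigma-I=G(A+B)-I=G(B-\lambda I)$ (using $GA=I-\lambda G$) drives the whole computation.

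Next I would decompose the error. Substituting $Y=X\beta+\ep$ into \eqref{equ:primal sketch} gives $\hat\beta_p-\beta=G(B-\lambda I)\beta+GX^\top\ep/n$. Taking expectations over the independent $\beta$ and $\ep$ (using $\E\beta\beta^\top=(\alpha^2/p)I$, $\E\ep\ep^\top=\sigma^2I$, and the vanishing cross term) reduces the MSE to
\[ M(\lambda)=\frac{\alpha^2}{p}\tr\!\big[G^2(B-\lambda I)^2\big]+\frac{\sigma^2}{n}\tr\!\big[G^2(A+B)\big]. \]
It then remains to find the almost-sure limits of the normalized traces $\tfrac1p\tr(G^2)$, $\tfrac1p\tr(G^2A)$, $\tfrac1p\tr(G^2B)$, and $\tfrac1p\tr(G^2B^2)$. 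The pure-$A$ traces are handled by the MP law: since $A=\xi\cdot(X_1^\top X_1/m)$ and $X_1^\top X_1/m$ has limiting spectrum $F_{\gamma/\xi}$, a rescaling gives $\tfrac1p\tr[(A+\lambda I)^{-i}]\to\xi^{-i}\theta_i(\gamma/\xi,\lambda/\xi)$ and $\tfrac1p\tr(G^2A)\to\xi^{-1}[\theta_1-(\lambda/\xi)\theta_2]$. For the linear term I condition on $A$ (equivalently on $G$) and use independence together with $\E B=\tfrac{n-m}{n}I\to(1-\xi)I$, so $\tfrac1p\tr(G^2B)\to(1-\xi)\tfrac1p\tr(G^2)$. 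Assembling the pure-$A$ and linear-$B$ pieces in the variance term, multiplying by $p/n\to\gamma$, and simplifying already yields $\gamma\sigma^2\xi^{-2}[\xi\theta_1-(\lambda+\xi-1)\theta_2]$, matching \eqref{equ: mse primal orthogonal}.

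The hard part will be the quadratic term $\tfrac1p\tr(G^2B^2)$, which is the main obstacle and the only place where a naive deterministic-equivalent substitution $B\approx(1-\xi)I$ gives the \emph{wrong} answer: it would predict $(1-\xi)^2\tfrac1p\tr(G^2)$, whereas the true limit carries an additional $(1-\xi)\gamma\,\tfrac1p\tr(G^2)$. To capture this I would write $B=n^{-1}\sum_{j=1}^{n-m}z_jz_j^\top$ with $z_j\sim\N(0,I_p)$ and expand
\[ \tr(G^2B^2)=\frac{1}{n^2}\sum_{j,k}(z_j^\top z_k)(z_k^\top G^2 z_j), \]
computing the conditional expectation over the Gaussian $z_j$ by Wick's formula. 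The off-diagonal ($j\neq k$) terms contribute $(1-\xi)^2\tr(G^2)$, while the diagonal ($j=k$) terms give $\E[\|z_j\|^2 z_j^\top G^2 z_j]=(p+2)\tr(G^2)$; after multiplying by the count $n-m$ and normalizing by $p$, the surviving factor $p/n\to\gamma$ produces exactly the extra $(1-\xi)\gamma\,\xi^{-2}\theta_2$.

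Finally I would assemble the bias term: combining $\tfrac1p\tr(G^2B^2)\to(1-\xi)(\gamma+1-\xi)\xi^{-2}\theta_2$ with $-2\lambda(1-\xi)\xi^{-2}\theta_2$ and $\lambda^2\xi^{-2}\theta_2$ and using $((1-\xi)-\lambda)^2=(\lambda+\xi-1)^2$ yields $\alpha^2\xi^{-2}[(\lambda+\xi-1)^2+\gamma(1-\xi)]\theta_2$, completing the match with \eqref{equ: mse primal orthogonal}. The one remaining technical point is to justify that each normalized trace concentrates almost surely on its conditional mean, so that the Wick expectation computations suffice; for the Gaussian quadratic forms in $z_j$ this follows from the Hanson--Wright inequality (the finite-moment hypotheses cover the non-Gaussian extension), and the fluctuations are of lower order than the $O(1)$ limits identified above.
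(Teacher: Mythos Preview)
Your proposal is correct and follows essentially the same approach as the paper: the paper likewise decouples by introducing the orthogonal complement $L_1$ (your reduction to $L=[I_m,0]$ achieves the same thing via rotational invariance), writes $MX^\top X/n=I-\lambda M+MN$ with $N=X^\top L_1^\top L_1X/n$ (your $B$), and then evaluates the traces using independence of $M$ and $N$, the MP law for the $M$-traces, and the Wishart first and second moments $\E[N]=\tfrac{n-m}{n}I$, $\E[N^2]=\tfrac{1}{n^2}[(n-m)+p(n-m)+(n-m)^2]I$ for the $N$-terms. Your Wick-formula computation of $\tfrac1p\tr(G^2B^2)$ is just an explicit derivation of that Wishart second moment, so the two proofs differ only in presentation.
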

{\bf Structure of the proof.} The proof is in Section \ref{prf: primal}, with explicit formulas in Section \ref{iso}. The $\theta_i$ are related to the resolvent of the MP law and its derivatives. In the proof, we decompose the MSE as the sum of variance and squared bias, both of which further reduce to the traces of certain random matrices, whose limits are determined by the MP law $F_\gamma$ and $\lambda$. The two terms on the RHS of Equation \eqref{equ: mse primal orthogonal} are the limits of squared bias and variance, respectively. There is an additional key step in the proof, which introduces \emph{the orthogonal complement} $L_1$ of the matrix $L$ such that $L^\top L + L_1^\top L_1 = I_n$, which leads to some Gaussian random variables appearing in the proof, and simplifies calculations.

{\bf Simulations.} A simulation in Figure \ref{fig: primal dual orthogonal sketch} (left) shows a good match with our theory. It also shows that sketching does not increase the MSE too much. In this case, by reducing the sample size to half the original one, we only increase the MSE by a factor of 1.05. This shows sketching can be very effective. 
We also see in Figure \ref{fig: primal dual orthogonal sketch} (right) that variance is compromised much more than bias. 

\begin{figure}
\centering
\begin{subfigure}{.45\textwidth}
\includegraphics[scale=0.45]{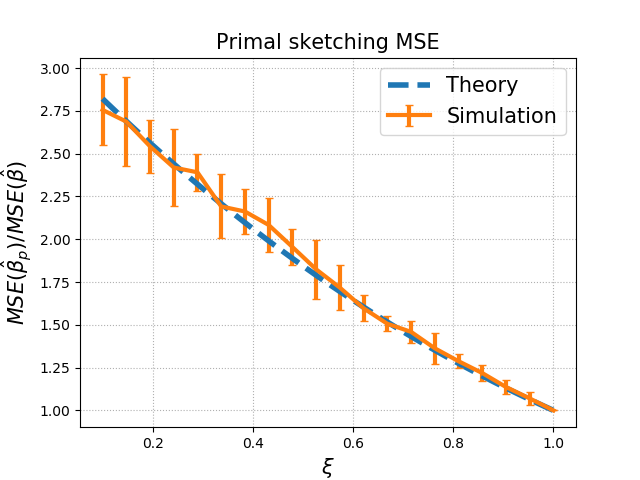}
\end{subfigure}
\begin{subfigure}{.45\textwidth}
\includegraphics[scale=0.45]{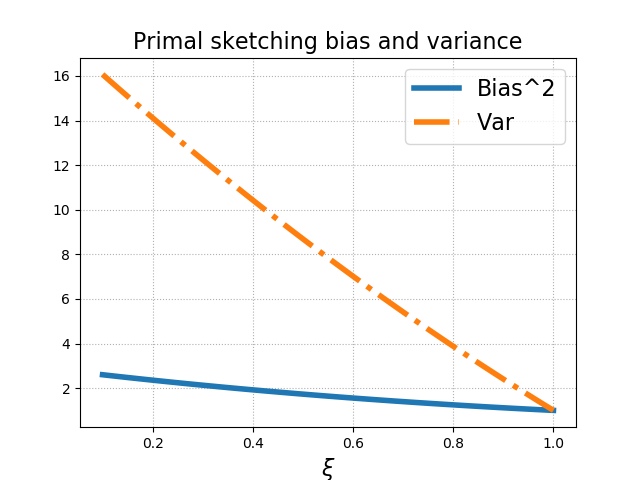}
\end{subfigure}
\caption{Primal orthogonal sketching with $n=500$, $\gamma=5$, $\lambda=1.5$, $\alpha=3$, $\sigma=1$. Left: MSE of primal sketching normalized by the MSE of ridge regression. The error bar is the standard deviation over 10 repetitions. Right: Bias and variance of primal sketching normalized by the bias and variance of ridge regression, respectively. }
\label{fig: primal dual orthogonal sketch}
\end{figure}

{\bf Robustness to tuning parameter.} The reader may wonder how strongly this depends on the choice of the regularization parameter $\lambda$. Perhaps ridge regression works poorly with this $\lambda$, so sketching cannot worsen it too much? What happens if we take the optimal $\lambda$ instead of a fixed one? In experiments in Section \ref{sec: simu} we show that the behavior is quite robust to the choice of regularization parameter.

The next theorem states a result for dual orthogonal sketching.

\begin{theorem}[Dual orthogonal sketching] Under the conditions of Theorem \ref{thm: primal}, we compute the dual sketched ridge regression with an orthogonal $p\times d$ sketching matrix $R$ ($d\le p$, $R^\top R=I_d$). Let $n, p$ and $d$ go to infinity with $p/n\rightarrow\gamma\in(0,\infty)$ and $d/n\rightarrow\zeta\in(0,\gamma)$. Then the MSE of $\hat\beta_d(\lambda)$ has the limit
\begin{align*}
\frac{\alpha^2}{\gamma}
\left[\gamma-1+(\lambda-\gamma+\zeta)^2\bar\theta_2(\zeta,\lambda)+(\gamma-\zeta)\bar\theta_1^2(\zeta,\lambda)\right] 
+ \sigma^2
\left[\bar\theta_1(\zeta,\lambda)-(\lambda+\zeta-\gamma)\bar\theta_2(\zeta,\lambda)
\right],
\end{align*}
where $\bar\theta_i(\zeta,\lambda)=(1-\zeta)/\lambda^i+\zeta\int(x+\lambda)^{-i}dF_{\zeta}(x)$, and $F_\zeta$ is the standard Marchenko-Pastur law.
\label{thm: dual}
\end{theorem}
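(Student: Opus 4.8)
The plan is to reduce everything to traces of resolvents of a single Wishart-type matrix, exploiting that $X$ is Gaussian and that an orthogonal sketch can be ``completed'' to a full rotation. First I would use rotational invariance of the iid Gaussian $X$ (conditioning on $R$ if it is random): choose $R_1\in\R^{p\times(p-d)}$ so that $Q=[R,R_1]$ is $p\times p$ orthogonal, whence $X_R:=XR$ and $X_{R_1}:=XR_1$ are \emph{independent} $n\times d$ and $n\times(p-d)$ matrices, each with iid standard normal entries, and $RR^\top+R_1R_1^\top=I_p$. Writing $M:=X_RX_R^\top/n+\lambda I_n$ (so the sketched Gram matrix equals $X_RX_R^\top=XRR^\top X^\top$) and using $XX^\top=X_RX_R^\top+X_{R_1}X_{R_1}^\top=n(M-\lambda I_n)+W$ with $W:=X_{R_1}X_{R_1}^\top$, the estimator becomes $\hat\beta_d=\tfrac1n X^\top M^{-1}(X\beta+\ep)$. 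Since $\ep$ has mean $0$, covariance $\sigma^2 I_n$, and is independent of $(X,R,\beta)$, the cross term vanishes and the MSE splits as a variance term $\E\|\tfrac1n X^\top M^{-1}\ep\|^2$ plus a squared-bias term $\E\|(P-I_p)\beta\|^2$, where $P:=\tfrac1n X^\top M^{-1}X$ is symmetric.

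For the variance, integrating out $\ep$ gives $\tfrac{\sigma^2}{n^2}\E\tr(XX^\top M^{-2})$. Substituting $XX^\top=n(M-\lambda I_n)+W$ and taking the conditional expectation $\E[W\mid X_R]=(p-d)I_n$ reduces this to $\sigma^2\big[\tfrac1n\E\tr(M^{-1})+\tfrac{p-d-n\lambda}{n}\cdot\tfrac1n\E\tr(M^{-2})\big]$. The key structural fact is that $X_RX_R^\top/n$ is the $n\times n$ Gram matrix of an $n\times d$ Gaussian with $d/n\to\zeta$, so its ESD converges to a mixture of the Marchenko--Pastur law $F_\zeta$ and a point mass at $0$ accounting for its rank deficiency; this yields $\tfrac1n\tr(M^{-i})\to\bar\theta_i(\zeta,\lambda)$ for $i=1,2$, exactly as defined in the statement. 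Combined with $(p-d-n\lambda)/n\to\gamma-\zeta-\lambda$, this gives the claimed variance $\sigma^2[\bar\theta_1-(\lambda+\zeta-\gamma)\bar\theta_2]$.

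For the bias, using $\E\beta\beta^\top=(\alpha^2/p)I_p$ gives $\E\|(P-I_p)\beta\|^2=\tfrac{\alpha^2}{p}\E\tr((I_p-P)^2)=\alpha^2\big[\tfrac1p\E\tr(P^2)-\tfrac2p\E\tr(P)+1\big]$, so I need the first two trace moments of $P$. The linear term is handled as in the variance: $\tfrac1p\E\tr(P)=\tfrac1{pn}\E\tr(XX^\top M^{-1})\to\tfrac1\gamma[1+(\gamma-\zeta-\lambda)\bar\theta_1]$. The quadratic term is the crux: expanding $\tr(P^2)=\tfrac1{n^2}\tr\big((XX^\top M^{-1})^2\big)$ with $XX^\top M^{-1}=nI_n-n\lambda M^{-1}+WM^{-1}$ produces, besides resolvent traces, the genuinely new object $\E\tr(WM^{-1}WM^{-1})$. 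Since $W$ is a sum of $p-d$ independent rank-one Gaussian outer products independent of $M$, a Wishart second-moment computation gives $\E[\tr(WM^{-1}WM^{-1})\mid X_R]=(p-d)(\tr M^{-1})^2+(p-d)(p-d+1)\tr(M^{-2})$. Collecting all terms, normalizing by $pn^2$, and passing to the limit (using that $\tfrac1n\tr(M^{-i})$ is bounded by $\lambda^{-i}$ and converges a.s., so by dominated convergence $\tfrac1{n^2}\E(\tr M^{-1})^2\to\bar\theta_1^2$), the linear-in-$\bar\theta_1$ pieces cancel and one is left with $\tfrac{\alpha^2}{\gamma}[\gamma-1+(\lambda-\gamma+\zeta)^2\bar\theta_2+(\gamma-\zeta)\bar\theta_1^2]$, matching the stated bias.

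I expect the main obstacle to be the quadratic trace $\E\tr(WM^{-1}WM^{-1})$: getting its two leading-order contributions correct requires careful bookkeeping of the $(p-d)^2$ versus $(p-d)$ terms, and it is precisely this object that supplies both the $(\gamma-\zeta)^2\bar\theta_2$ and the $(\gamma-\zeta)\bar\theta_1^2$ pieces of the final answer. A secondary technical point is justifying the replacement of $\E(\tr M^{-1})^2$ by its limiting factorized value, which follows from a.s.\ convergence and uniform boundedness of the resolvent trace rather than a separate concentration inequality. The orthogonal-complement device is what makes all of this tractable, since it renders $W$ both Gaussian and independent of the resolvent $M^{-1}$.
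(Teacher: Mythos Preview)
Your proposal is correct and follows essentially the same route as the paper: both complete $R$ to a full orthogonal basis via $R_1$ so that $XR$ and $XR_1$ are independent Gaussians, split the MSE into bias and variance, reduce everything to traces of resolvents of $X_RX_R^\top/n+\lambda I_n$, and handle the key quadratic term via the Wishart second-moment identity (your $(p-d)(\tr M^{-1})^2+(p-d)(p-d+1)\tr(M^{-2})$ is exactly the paper's $\E[MNMN\mid M]$ after accounting for the different normalizations and the fact that your $M$ is the paper's $M^{-1}$). If anything, you are slightly more explicit than the paper about justifying $\E(\tfrac1n\tr M^{-1})^2\to\bar\theta_1^2$ via a.s.\ convergence plus boundedness.
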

{\bf Proof structure and simulations.} The proof in Section \ref{prf: dual} follows similar path to the previous one. Here $\bar\theta_i$ comes in because of the companion Stieltjes transform of MP law. The simulation results shown in Figure \ref{fig: dual sketch} agrees well with our theory. They are similar to the ones before: sketching has favorable properties, and the bias increases less than the variance.


{\bf Optimal tuning parameters.} For both primal and dual sketching, the optimal regularization parameter minimizing the MSE seems analytically intractable. Instead, we use a numerical approach in our experiments, based on a binary search. Since this is one-dimensional problem, there are no numerical issues. See Figure \ref{Fig: primal_dual_optimal_lambda} in Section \ref{sec: optimal lambda}.

\subsubsection{Extreme projection --- marginal regression}

It is of special interest to investigate \emph{extreme projections}, where the sketching dimension is much reduced compared to the sample size, so $m \ll n$. This corresponds to $\xi=0$. This can also be viewed as a scaled \emph{marginal regression} estimator, i.e., $\hbeta \propto X^\top Y$.  For dual sketching, the same case can be recovered with $\zeta=0$. Another interest of studying this special case is that the formula for MSE simplifies a lot.

\begin{theorem}[Marginal regression] Under the same assumption as Theorem \ref{thm: primal}, let $\xi=0$. Then the form of the MSE is
$M(\lambda)=
[\alpha^2\left[(\lambda-1)^2+\gamma\right]
+\sigma^2\gamma]/\lambda^2.$
Moreover, the optimal $\lambda^*$ that minimizes this equals $\gamma\sigma^2/\alpha^2+1+\gamma$ and the optimal MSE is
$M(\lambda^*)=\alpha^2\left(1-\alpha^2/[\alpha^2(1+\gamma)+\gamma\sigma^2]\right).$
\label{thm: marginal}
\end{theorem}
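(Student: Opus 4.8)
The plan is to prove the formula directly, bypassing the general machinery of Theorem \ref{thm: primal}, and then to obtain the optimal $\lambda^*$ by one-variable calculus. The extreme-projection regime $\xi=0$ corresponds to $m/n\to 0$, where the sketched resolvent $(X^\top L^\top LX/n+\lambda I_p)^{-1}$ acts as $\lambda^{-1}I_p$ on all but a vanishing fraction of directions, so the estimator collapses to the scaled marginal-regression estimator $\hat\beta=(n\lambda)^{-1}X^\top Y$. I would therefore compute the MSE of this explicit estimator, which reduces to a short Gaussian moment calculation, and separately note that the same expression arises by taking $\xi\to 0$ in \eqref{equ: mse primal orthogonal}.

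Writing $c=(n\lambda)^{-1}$ and using $Y=X\beta+\ep$, I decompose $\hat\beta-\beta=(c\,X^\top X-I_p)\beta+c\,X^\top\ep$. Since $\beta$ and $\ep$ are independent, mean zero, and independent of $X$, the cross term vanishes in expectation and
\[
M(\lambda)=\frac{\alpha^2}{p}\,\E\tr\!\big[(c\,X^\top X-I_p)^2\big]+c^2\sigma^2\,\E\tr(X^\top X),
\]
using $\E\beta\beta^\top=(\alpha^2/p)I_p$ and $\E\ep\ep^\top=\sigma^2 I_n$. The two traces are elementary for iid standard Gaussian $X$: $\E\tr(X^\top X)=np$, and expanding $\tr[(X^\top X)^2]$ as a sum over entries gives $\E\tr[(X^\top X)^2]=pn(n+p+1)$ (each of the $p$ diagonal entries contributes the fourth-moment term $n(n+2)$, each of the $p(p-1)$ off-diagonal entries contributes $n$). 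Substituting $c=(n\lambda)^{-1}$, expanding the square, and letting $n,p\to\infty$ with $p/n\to\gamma$ collapses $(n+p+1)/n\to 1+\gamma$ and $p/n\to\gamma$, yielding the bias$^2$ term $\alpha^2[(\lambda-1)^2+\gamma]/\lambda^2$ and the variance term $\gamma\sigma^2/\lambda^2$, i.e.\ exactly the claimed $M(\lambda)$.

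For the optimal parameter I rewrite $M(\lambda)=\alpha^2-2\alpha^2/\lambda+a/\lambda^2$ with $a:=\alpha^2(1+\gamma)+\gamma\sigma^2$. Then $M'(\lambda)=2(\alpha^2\lambda-a)/\lambda^3$, which vanishes only at $\lambda^*=a/\alpha^2=\gamma\sigma^2/\alpha^2+1+\gamma$; since $M'$ changes sign from negative to positive there, this is the global minimizer on $\lambda>0$. Substituting back gives $M(\lambda^*)=\alpha^2-\alpha^4/a=\alpha^2\big(1-\alpha^2/[\alpha^2(1+\gamma)+\gamma\sigma^2]\big)$, as stated.

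The only genuinely delicate point is the reduction to marginal regression: the rank-$m$ matrix $X^\top L^\top LX/n$ does not vanish in operator norm as $\xi\to 0$, so I must argue that it perturbs the resolvent only on an asymptotically negligible fraction of the $p$ coordinates and hence does not affect the limiting normalized MSE. I expect the cleanest way to discharge this is to avoid the reduction and instead establish the $\xi\to 0$ asymptotics of the resolvent moments $\theta_1(\gamma/\xi,\lambda/\xi)$ and $\theta_2(\gamma/\xi,\lambda/\xi)$ of Theorem \ref{thm: primal}. Here the Marchenko-Pastur law has a diverging aspect ratio $\gamma/\xi$; its mass splits as a weight $1-\xi/\gamma$ at $0$ and a weight $\xi/\gamma$ near the diverging edge of order $\gamma/\xi$, giving $\theta_1\sim\xi/\lambda$ and $\theta_2\sim\xi^2/\lambda^2$ to leading order. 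The mild subtlety is the subtraction in the variance coefficient of \eqref{equ: mse primal orthogonal}: one must check that the leading terms do not fully cancel, and indeed $\xi\theta_1/\xi^2-(\lambda-1)\theta_2/\xi^2\to 1/\lambda-(\lambda-1)/\lambda^2=1/\lambda^2$ survives, reproducing $\gamma\sigma^2/\lambda^2$. Everything else is routine.
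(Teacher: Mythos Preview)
Your argument is correct and takes a genuinely different route from the paper's. The paper never passes through the explicit marginal estimator $(n\lambda)^{-1}X^\top Y$; instead it keeps the \emph{finite-sample} bias and variance expressions from the proof of Theorem~\ref{thm: primal} (which involve $\tr[(X^\top L^\top LX/n+\lambda I_p)^{-2}]$ and the analogous variance trace) and sends $m/n\to 0$ directly, arguing that this normalized trace concentrates at $1/\lambda^2$. Your direct Gaussian-moment computation for $\hat\beta=(n\lambda)^{-1}X^\top Y$ is more elementary---it needs only $\E\tr[(X^\top X)^2]=pn(n+p+1)$---and your one-variable calculus for $\lambda^*$ and $M(\lambda^*)$ is exactly right, whereas the paper simply declares that step ``elementary.'' The tradeoff is that you must bridge from the sketched estimator to the marginal one, and here you are actually more careful than the paper: you correctly flag that the operator norm of $X^\top L^\top LX/n$ does \emph{not} vanish, while the paper's own proof asserts it does. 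The correct reason the trace limit still holds is the one you give---only $m$ of the $p$ resolvent eigenvalues are perturbed and $m/p\to 0$. Your alternative route via $\xi\to 0$ in \eqref{equ: mse primal orthogonal}, using $\theta_1(\gamma/\xi,\lambda/\xi)\sim\xi/\lambda$ and $\theta_2(\gamma/\xi,\lambda/\xi)\sim\xi^2/\lambda^2$, also works and matches the paper's remark that interchanging the limits gives the same answer; in either approach the residual obligation is the continuity of the limit in $\xi$ at $\xi=0$, which both you and the paper leave as a routine check.
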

The proof is in Section \ref{prf: marginal}. When is the optimal MSE of marginal regression small? Compared to the MSE of the zero estimator $\alpha^2$, it is small when $\gamma(\sigma^2/\alpha^2+1)+1$ is large. In Figure \ref{Fig: compare_optimal_margial_ridge} (left), we compare marginal and ridge regression for different aspect ratios and SNR. When the signal to noise ratio (SNR) $\alpha^2/\sigma^2$ is small or the aspect ratio $\gamma$ is large, marginal regression does not increase the MSE much. As a concrete example, if we take $\alpha^2 = \sigma^2 = 1$ and $\gamma = 0.7$, the marginal MSE is $1-1/2.4\approx 0.58$. The optimal ridge MSE is about $0.52$, so their ratio is only ca. $0.58/0.52\approx 1.1$. It seems quite surprising that a simple-minded method like marginal regression can work so well. However, the reason is that when the SNR is small, we cannot expect ridge regression to have good performance. Large $\gamma$ can also be interpreted as small SNR, where ridge regression works poorly and sketching does not harm performance too much. 

\begin{figure}
\centering
\begin{subfigure}{.45\textwidth}
\includegraphics[scale=0.45]{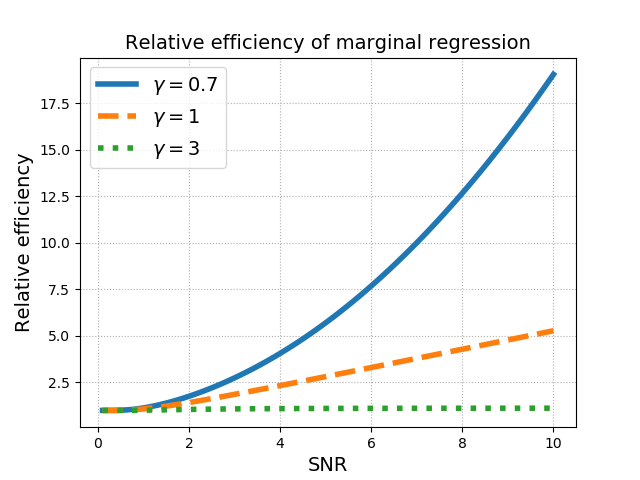}
\end{subfigure}
\begin{subfigure}{.45\textwidth}
\includegraphics[scale=0.45]{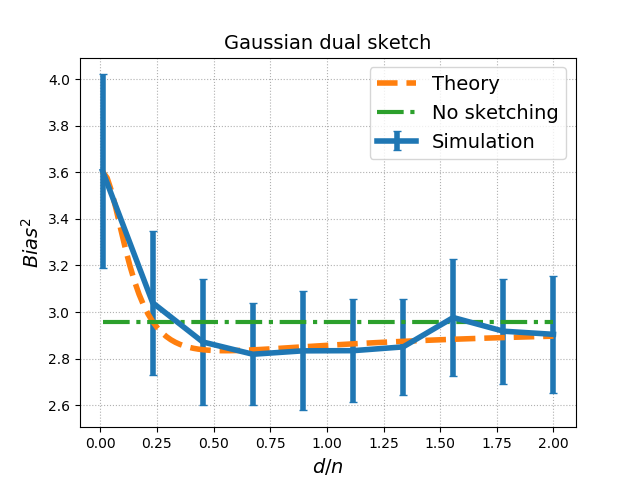}
\end{subfigure}
\caption{Left: Ratio of optimal MSE of marginal regression to that of optimally tuned ridge regression, for three values of $\gamma=p/n$, as a function of the SNR $\alpha^2/\sigma^2$. Right: Gaussian dual sketch when there is no noise. $\gamma=0.4,$ $\alpha=1,$ $\lambda=1$ (both for original and sketching). Standard error over 50 experiments.}
\label{Fig: compare_optimal_margial_ridge}
\end{figure}

\subsection{Gaussian sketching}
\label{sec: gaussian sketching}
In this section, we study Gaussian sketching. The following theorem states the bias of dual Gaussian sketching. The bias is enough to characterize the performance in the high SNR regime where $\alpha/\sigma\to\infty$, and we discuss the extension to low SNR after the proof.
\begin{theorem}[Bias of dual Gaussian sketch]
Suppose $X$ is an $n\times p$ standard Gaussian random matrix. Suppose also that $R$ is a $p\times d$ matrix with i.i.d. $\N(0,1/d)$ entries. Then the bias of dual sketch has the expression 
$\mathrm{Bias}^2(\hat{\beta}_d)=\alpha^2+\alpha^2/\gamma\cdot\left[m'(z)-2m(z)\right]|_{z=0},$
where $m$ is a function described below, and $m'(z)$ denotes the derivative of $m$ w.r.t. $z$. Below, we use the branch of the square root with positive imaginary part. 

The function $m$ is characterized by its inverse function, which has the explicit formula
$m^{-1}(z)=1/[1+z/\zeta]-[\gamma+1-\sqrt{(\gamma-1)^2+4\lambda z}]/(2z)$ for complex $z$ with positive imaginary part.
\label{thm: bias dual gaussian}
\end{theorem}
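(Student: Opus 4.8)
The plan is to reduce the squared bias to two normalized traces, package those traces into a single analytic generating function $m$, and then identify $m$ as the Stieltjes transform of a free additive convolution, which produces the stated inverse-function formula. First I would isolate the signal part of the estimator. Writing $Y=X\beta+\ep$ and $H:=XRR^\top X^\top/n$, the estimator splits as $\hat\beta_d = P\beta + n^{-1}X^\top(H+\lambda I_n)^{-1}\ep$, where $P:=n^{-1}X^\top(H+\lambda I_n)^{-1}X$. Since $\ep$ has mean zero and is independent of $\beta$, the cross term vanishes in expectation over the noise, so $\mathrm{Bias}^2(\hat\beta_d)=\E\|(P-I_p)\beta\|^2$. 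Averaging over $\beta$ (using $\E\beta\beta^\top=(\alpha^2/p)I_p$ and the symmetry of $P$) gives
\[
\mathrm{Bias}^2(\hat\beta_d)=\frac{\alpha^2}{p}\tr\big[(P-I_p)^2\big]=\alpha^2+\frac{\alpha^2}{p}\big[\tr(P^2)-2\tr(P)\big].
\]
By cyclicity of the trace, $\tr(P)=\tr[(H+\lambda I)^{-1}G]$ and $\tr(P^2)=\tr[((H+\lambda I)^{-1}G)^2]$ with $G:=XX^\top/n$, so with $p=\gamma n$ the remaining task is to find the limits of $n^{-1}\tr(A)$ and $n^{-1}\tr(A^2)$ for $A:=(H+\lambda I)^{-1}G$.

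Next I would combine these two traces into one object. Define $m(z):=\lim n^{-1}\tr[A(I-zA)^{-1}]$, which is well defined as a power series for small $z$ and satisfies $m(0)=\lim n^{-1}\tr(A)$ and $m'(0)=\lim n^{-1}\tr(A^2)$. Because $A$ and $(I-zA)^{-1}$ commute, $A(I-zA)^{-1}=(A^{-1}-zI)^{-1}$, so $m$ is exactly the Stieltjes transform of the limiting spectral distribution of $A^{-1}=G^{-1}(H+\lambda I)$. Substituting $m(0)$ and $m'(0)$ into the display and using $1/p=1/(\gamma n)$ gives $\mathrm{Bias}^2\to\alpha^2+(\alpha^2/\gamma)[m'(z)-2m(z)]|_{z=0}$, matching the claim; it then remains only to identify $m$, for which the inverse-function characterization is convenient since we only ever need $z=0$.

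The core step is the limiting law of $A^{-1}$. Using the SVD $X=O_1\Lambda O_2^\top$, one checks that $A^{-1}$ is similar to $C+\lambda D$, where $C:=(O_2^\top R)(O_2^\top R)^\top$ and $D:=(XX^\top/n)^{-1}$. By rotational invariance of the Gaussian $R$, the matrix $\tilde R:=O_2^\top R$ is again $n\times d$ with i.i.d.\ $\N(0,1/d)$ entries and, crucially, is independent of $X$; hence $C$ is a Wishart matrix whose law is invariant under orthogonal conjugation, and it is independent of $D$, which is a function of $X$ alone. An orthogonally invariant random matrix and an independent matrix are asymptotically free, so the limiting spectrum of $A^{-1}$ is the free additive convolution of the limit law of $C$ (a Marchenko--Pastur law whose parameter is governed by $\zeta$) and that of $\lambda D$ (the companion Marchenko--Pastur law of $G$, parametrized by $\gamma$ and $\lambda$). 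I would then invoke the subordination/$R$-transform identity for additive free convolution, under which the functional inverses of the Stieltjes transforms add up to the $1/z$ term. Computing the inverse transform of $C$ yields the $1/(1+z/\zeta)$ contribution and that of $\lambda D$ yields the $[\gamma+1-\sqrt{(\gamma-1)^2+4\lambda z}]/(2z)$ contribution, and assembling them produces the stated formula for $m^{-1}$.

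The main obstacle is this last step: establishing the asymptotic freeness cleanly (the delicate point is that $G$ and $H$ are built from the \emph{same} $X$, which the SVD reduction resolves by exhibiting $C$ as independent of the singular values of $X$) and then computing and combining the two inverse Stieltjes transforms with the correct Marchenko--Pastur conventions. Two technical caveats need attention: when $\gamma<1$ (or $\zeta<1$) the relevant matrices are singular and the Marchenko--Pastur laws carry a point mass at the origin, so the $A^{-1}$ picture is cleanest for $\gamma>1$ and should be extended by analytic continuation in $z$; and the passage from $n^{-1}\tr(\cdot)$ to its deterministic limit requires an almost-sure concentration argument for the traces. Once $m^{-1}$ is identified, evaluation at $z=0$ is immediate and needs no explicit inversion of $m$.
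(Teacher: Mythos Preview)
Your outline is correct and follows essentially the same route as the paper: expand $\mathrm{Bias}^2$ into $\alpha^2+\alpha^2/p\cdot[\tr(A^2)-2\tr(A)]$ with $A=(H+\lambda I)^{-1}G$, recognize these traces as $m(0)$ and $m'(0)$ for the Stieltjes transform of the limit law of $A^{-1}$, and identify that law as the free additive convolution of a Marchenko--Pastur and a scaled inverse Marchenko--Pastur. The only substantive difference is in the decoupling step. The paper does it via the Wishart representation $XRR^\top X^\top\stackrel{d}{=}G^{1/2}WG^{1/2}$ with $W\sim\mathcal W_n(I_n,d)$ independent of $G=XX^\top$, which after factoring $G^{1/2}$ gives directly $\tr[(W/d+\lambda(G/n)^{-1})^{-1}]$; you instead use the SVD of $X$ together with rotational invariance of $R$. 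Both arguments land on the same sum $W/d+\lambda(XX^\top/n)^{-1}$ of independent, asymptotically free pieces, and then compute $m^{-1}$ by adding $R$-transforms, so neither approach buys anything the other does not. One notational glitch to clean up: as written, $C=(O_2^\top R)(O_2^\top R)^\top$ is $p\times p$ while $D=(XX^\top/n)^{-1}$ is $n\times n$; you need the \emph{thin} SVD $X=O_1\Lambda\tilde O_2^\top$ with $\tilde O_2\in\R^{p\times n}$, so that $\tilde R:=\tilde O_2^\top R\in\R^{n\times d}$, $C=\tilde R\tilde R^\top\in\R^{n\times n}$, and the similarity $A^{-1}\sim \tilde R\tilde R^\top+\lambda n\Lambda^{-2}$ (conjugation by $\Lambda O_1^\top$) actually makes sense.
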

{\bf About the proof.} The proof is in Section \ref{prf: bias dual gaussian}.We mention that the same result holds when the matrices involved have iid non-Gaussian entries, but the proof is more technical. The current proof is based on free probability theory \citep[e.g.,][]{voiculescu1992free,hiai2006semicircle,couillet2011random}.  The function $m$ is the Stieltjes transform of the free additive convolution of a standard MP law $F_{1/\xi}$ and a scaled inverse MP law $\smash{\lambda/\gamma\cdot F_{1/\gamma}^{-1}}$ (see the proof). 

{\bf Numerics.} To evaluate the formula, we note that $m^{-1}(m(0))=0$, so $m(0)$ is a root of $m^{-1}$. Also, $dm(0)/dz$ equals $\smash{1/(dm^{-1}(y)/dy|_{y=m(0)})}$, the reciprocal of the derivative of $m^{-1}$ evaluated at $m(0)$. We use binary search to find the numerical solution. The theoretical result agrees with the simulation quite well, see Figure \ref{Fig: compare_optimal_margial_ridge}. 


Somewhat unexpectedly, the MSE of dual sketching can be below the MSE of ridge regression, see Figure \ref{Fig: compare_optimal_margial_ridge}. This can happen when the original regularization parameter is suboptimal. As $d$ grows, the MSE of Gaussian dual sketching converges to that of ridge regression. 





We have also found the bias of primal Gaussian sketching. However, stating the result requires free probability theory, and so we present it in the Appendix, see Theorem \ref{thm: bias primal gaussian}.
To further validate our results, we present additional simulations in Sec. \ref{sec: simu}, for both fixed and optimal regularization parameters after sketching. A detailed study of the computational cost for sketching in Sec. \ref{comp} concludes, as expected, that primal sketching can reduce cost when $p<n$, while dual sketching can reduce it when $p>n$; and also provides a more detailed analysis.

\subsubsection*{Acknowledgments}

The authors thank Ken Clarkson for helpful discussions and for providing the reference \cite{chen2015fast}. ED was partially supported by NSF BIGDATA grant IIS 1837992. A version of our manuscript is available on arxiv at \url{https://arxiv.org/abs/1910.02373}.

{\small 
\setlength{\bibsep}{0.2pt plus 0.3ex}
\bibliographystyle{iclr2020_conference}
\bibliography{references}
}

\appendix
\section{Appendix}

\subsection{Proof of Theorem \ref{thm:rep}}
\label{prf: rep}
If $p/n\to\gamma$ and the spectral distribution of $\Sigma$ converges to $H$, we have by the general Marchenko-Pastur (MP) theorem of Rubio and Mestre \citep{rubio2011spectral}, that 
\begin{align*}
(\hSigma+ \lambda I)^{-1} &\asymp (c_p \Sigma + \lambda I)^{-1},
\end{align*}

where  $c_p:=c(n,p,\Sigma,\lambda)$ is the unique positive solution of the fixed point equation 
\beqs 
1-c_p=\frac{c_p}{n}\tr\left[\Sigma(c_p\Sigma+\lambda I)^{-1}\right]. 
\eeqs
Here, using the terminology of the calculus of deterministic equivalents \citep{dobriban2018understanding}, two sequences of (not necessarily symmetric)  $n\times n$ matrices $A_n, B_n$ of growing dimensions are \emph{equivalent}, and we write 
$$A_n \asymp B_n$$ if 
$\lim_{n\to\infty}\tr\left[C_n(A_n-B_n)\right]=0$ almost surely, for any sequence $C_n$ of (not necessarily symmetric) $n\times n$ deterministic matrices  with bounded trace norm, i.e., such that $\lim\sup\|C_n\|_{tr}<\infty$  \citep{dobriban2018understanding}. Informally, linear combinations of the entries of $A_n$ can be approximated by the entries of $B_n$. 

We start with 
\begin{align*}
\hat\beta&
=\left(X^\top X/n+\lambda I_p\right)^{-1}X^\top Y/n
=\left(X^\top X/n+\lambda I_p\right)^{-1}\frac{X^\top (X\beta+\ep)}{n}\\
&=(\hSigma+\lambda I_p)^{-1}\hSigma\beta
+(\hSigma+\lambda I_p)^{-1}\frac{X^\top\ep}{n}.
\end{align*}
Then, by the general MP law written in the language of the calculus of deterministic equivalents 
\begin{align*}
(\hSigma+\lambda I_p)^{-1}\hSigma 
= I_p - \lambda(\hSigma+\lambda I_p)^{-1}
\asymp I_p - \lambda (c_p \Sigma + \lambda I)^{-1}
= c_p \Sigma(c_p \Sigma + \lambda I)^{-1}.
\end{align*}
By the definition of equivalence for vectors, 
\begin{align*}
(\hSigma+\lambda I_p)^{-1}\hSigma \beta 
\asymp c_p \Sigma(c_p \Sigma + \lambda I)^{-1} \beta.
\end{align*}
We note a subtle point here. The rank of the matrix $M:=(\hSigma+\lambda I_p)^{-1}\hSigma$ is at most $n$, and so it is not a full rank matrix when $n<p$. In contrast, $c_p \Sigma(c_p \Sigma + \lambda I)^{-1} $ can be a full rank matrix. Therefore, for the vectors $\beta$ in the null space of $\hSigma$, which is also the null space of $X$, we certainly have that the two sides are not equal. However, here we assumed that the matrix $X$ is random, and so its null space is a random $\max(p-n,0)$ dimensional linear space. Therefore, for any fixed vector $\beta$, the random matrix $M$ will not contain it in its null space with high probability, and so there is no contradiction. 

We should also derive an asymptotic equivalent for
\begin{align*}
(\hSigma+\lambda I_p)^{-1}\frac{X^\top\ep}{n}.
\end{align*}
Suppose we have Gaussian noise, and let $Z\sim \N(0,I_p)$. Then we can write
\begin{align*}
(\hSigma+\lambda I_p)^{-1}\frac{X^\top\ep}{n}
=_d (\hSigma+\lambda I_p)^{-1}\hSigma^{1/2} \frac{\sigma Z}{n^{1/2}}.
\end{align*}
So the question reduces to finding a deterministic equivalent for $h(\hSigma)$, where $h(x) = (x+\lambda)^{-2} x$. Note that
$$h(x) = (x+\lambda)^{-2} x = (x+\lambda)^{-2} (x+\lambda - \lambda)
= (x+\lambda)^{-1} - \lambda (x+\lambda)^{-2}.
$$
By the calculus of determinstic equivalents: $(\hSigma+\lambda)^{-1}\asymp (c_p \Sigma+\lambda I)^{-1}$. Moreover, fortunately the limit of the second part was recently calculated in \citep{dobriban2019one}. This used the so-called "differentiation rule" of the calculus of deterministic equivalents to find
$$(\hSigma+\lambda)^{-2}\asymp (c_p \Sigma+\lambda I)^{-2}(I-c'_p\Sigma).$$
The derivative $c_p' = dc_p/dz$ has been found in \cite{dobriban2019one}, in the proof of Theorem 3.1, part 2b. The result is (with $\gamma_p = p/n$, $H_p$ the spectral distribution of $\Sigma$, and $T$ a random variable distributed according to $H_p$)
\begin{align}
c_p'
&=\frac{\gamma_p \E_{H_p}\frac{c_pT}{(c_p T-z)^2}}
{-1 + \gamma_p z\E_{H_p}\frac{T}{(c_pT-z)^2}}.
\label{cpprime}
\end{align}
So, we find the final answer
\begin{align*}
(\hSigma+\lambda I_p)^{-1}\hSigma^{1/2}
&
\asymp A(\Sigma,\lambda) :=
(c_p \Sigma+\lambda I)^{-1}-
\lambda 
(c_p \Sigma+\lambda I)^{-2}(I-c'_p\Sigma).
\end{align*}

\subsection{Risk analysis}
\label{mse}
Figure \ref{original_vary_lambda} shows a simulation result. We see a good match between theory and simulation.
\begin{figure}
\centering
\includegraphics[width=.4\textwidth]{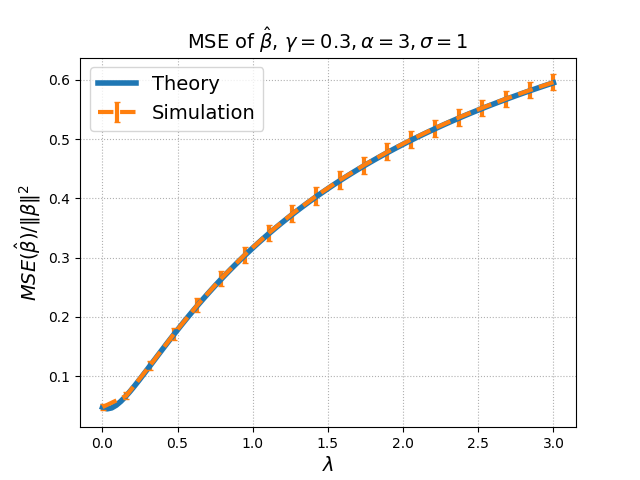}
\caption{Simulation for ridge regression. We take $n=1000$, $\lambda=0.3$. Also, $X$ has iid $\N(0,1)$ entries, $\beta_i\sim_{iid}\N(0,\alpha^2/p)$, $\ep_i\sim_{iid}\N(0,\sigma^2)$, with $\alpha=3, \sigma=1$. The standard deviations are over 50 repetitions. The theoretical lines are plotted according to Theorem \ref{thm: MSE ridge}. The MSE is normalized by the norm of $\beta$.}
\label{original_vary_lambda}
\end{figure}

\subsubsection{Proof of Theorem \ref{thm: MSE ridge}}
\label{prf: MSE ridge}
\begin{proof}
The MSE of $\hat\beta$ has the form
\begin{align*}
\E\|\hat\beta-\beta\|^2=\mathrm{bias}^2+\delta^2,
\end{align*}
where
\begin{align*}
\mathrm{bias}^2&=\EE{\left\|\left(X^\top X/n+\lambda I_p\right)^{-1}X^\top X/n\beta-\beta\right\|_2^2},\\
\delta^2&=\sigma^2\EE{\left\|\left(X^\top X/n+\lambda I_p\right)^{-1}n^{-1}X^\top\right\|_F^2}.
\end{align*}
We assume that $X$ has iid entries of zero mean and unit variance, and that $\E\beta=0$, $\Var{\beta}=\alpha^2/p I_p$. As $p/n\rightarrow\gamma$ as $n$ goes to infinity, the ESD of $\frac{1}{n}X^\top X$ converges to the MP law $F_\gamma$. So we have
\begin{align*}
\mathrm{bias}^2&=\EE{\left\|\lambda\left(X^\top X/n+\lambda I_p\right)^{-1}\beta\right\|_2^2}\\
&=\alpha^2\lambda^2\EE{\frac{1}{p}\tr[\left(X^\top X/n+\lambda I_p\right)^{-2}]}
\rightarrow\alpha^2\lambda^2\int\frac{1}{(x+\lambda)^2}dF_\gamma(x),
\end{align*}
and
\begin{align*}
\delta^2&=\frac{\sigma^2}{n^2}\EE{\tr[\left(X^\top X/n+\lambda I_p\right)^{-2}X^\top X]}\\
&=\frac{\sigma^2}{n}\EE{\tr[\left(X^\top X/n+\lambda I_p\right)^{-1}-\lambda\left(X^\top X/n+\lambda I_p\right)^{-2}]}\\
&\rightarrow\sigma^2\gamma\left[\int\frac{1}{x+\lambda}dF_\gamma(x)-\lambda\int\frac{1}{(x+\lambda)^2}dF_\gamma(x)\right].
\end{align*}


Denoting $\theta_i(\gamma,\lambda)=\int\frac{1}{(x+\lambda)^i}dF_\gamma(x)$, then 
\begin{align}
AMSE(\hat\beta)=\alpha^2\lambda^2\theta_2+\gamma\sigma^2[\theta_1-\lambda\theta_2].
\end{align}


For the standard Marchenko-Pastur law (i.e., when $\Sigma=I_p$), we have the explicit forms of $\theta_1$ and $\theta_2$. Specifically,
\begin{align*}
\theta_1=\int\frac{1}{x+\lambda}dF_\gamma(x)&=-\frac{1}{2}\left[\frac{2(1+\lambda)}{\lambda\gamma}+\frac{2}{\sqrt\gamma\lambda}z_2\right]
\end{align*}
where
\begin{align*}
z_2=-\frac{1}{2}\left[(\sqrt\gamma+\frac{1+\lambda}{\sqrt\gamma})+\sqrt{(\sqrt\gamma+\frac{1+\lambda}{\sqrt\gamma})^2-4}\right].
\end{align*}
It is known that the limiting Stieltjes transform $m_{F_\gamma} :=m_\gamma$ of $\hSigma$ has the explicit form \citep{marchenko1967distribution}:
\beqs 
m_\gamma(z) = \frac{(z+\gamma-1)+\sqrt{(z+\gamma-1)^2-4z \gamma}}
{-2z \gamma}. 
\eeqs
As usual in the area, we use the principal branch of the square root of complex numbers. Hence $\theta_1 = \frac{(-\lambda+\gamma-1)+\sqrt{(-\lambda+\gamma-1)^2+4\lambda \gamma}}
{2\lambda\gamma}$. Also

\begin{align*}
\theta_2(\gamma,\lambda)&=\int\frac{1}{(x+\lambda)^2}dF_\gamma(x)
=-\int\frac{d}{d\lambda}\frac{1}{x+\lambda}dF_\gamma(x)\\
&=-\frac{d}{d\lambda}\theta_1
=-\frac{1}{\gamma\lambda^2}+\frac{1}{\sqrt\gamma}\frac{d}{d\lambda}\frac{z_2}{\lambda}\\
&=-\frac{1}{\gamma\lambda^2}+\frac{\gamma+1}{2\gamma\lambda^2}-\frac{1}{2\sqrt{\gamma}}[\frac{\lambda+\gamma+1}{\gamma\lambda\sqrt{(\sqrt\gamma+\frac{1+\lambda}{\sqrt\gamma})^2-4}}-\frac{\sqrt{(\sqrt\gamma+\frac{1+\lambda}{\sqrt\gamma})^2-4}}{\lambda^2}]
\end{align*}

For the residual,
\begin{align*}
&\EE{\frac{1}{n}\|Y-X\hat\beta\|_2^2|X}=\alpha^2\lambda^2\frac{1}{p}\tr[\left(X^\top X/n+\lambda I_p\right)^{-1}-\lambda\left(X^\top X/n+\lambda I_p\right)^{-2}]\\
&+\sigma^2\frac{1}{n}[\tr(I_n)
-2\tr\left(X^\top X/n+\lambda I_p\right)^{-1}X^\top X/n
+\tr\left(\left(X^\top X/n+\lambda I_p\right)^{-1}X^\top X/n\right)^2].
\end{align*}
Next,
\begin{align*}
\EE{\frac{1}{p}\tr[\left(\left(X^\top X/n+\lambda I_p\right)^{-1}X^\top X/n\right)^2]}
&=\EE{\frac{1}{p}\tr[\left(I_p-\lambda\left(X^\top X/n+\lambda I_p\right)^{-1}\right)^2]}\\
&\rightarrow1-2\lambda\theta_1+\lambda^2\theta_2.
\end{align*}

Therefore
\begin{align*}
\EE{\frac{1}{n}\|Y-X\hat\beta\|_2^2}\rightarrow
&\alpha^2\lambda^2[\theta_1-\lambda\theta_2]
+\sigma^2
\left[1-2\gamma(1-\lambda\theta_1)+\gamma(1-2\lambda\theta_1+\lambda^2\theta_2)
\right]\\
&=
\alpha^2\lambda^2[\theta_1-\lambda\theta_2] 
+\sigma^2
\left[1-\gamma+\gamma\lambda^2\theta_2)
\right].
\end{align*}
\end{proof}

\subsection{Bias-variance tradeoff}
\label{bv}
The limiting MSE decomposes into a limiting squared bias and variance. The specific forms of these are 
\begin{align*}
\mathrm{bias}^2=\alpha^2\int\frac{\lambda^2}{(x+\lambda)^2}dF_\gamma(x),
\,\,\,\,\,\,\,\,\,\,\,\,
\mathrm{var}=\gamma\sigma^2
\int\frac{x}{(x+\lambda)^2}dF_\gamma(x).
\end{align*}

See Figure \ref{Fig: ridge bias-var tradeoff} for a plot. We can make several observations. 
\begin{enumerate}
	\item The bias increases with $\lambda$, starting out at zero for $\lambda=0$ (linear regression), and increasing to $\alpha^2$ as $\lambda\to\infty$ (zero estimator). 
	\item The variance decreases with $\lambda$, from $\gamma\sigma^2 \int x^{-1}dF_\gamma(x)$ to zero.
\item In the setting plotted in the figure, when $\alpha^2$ and $\sigma^2$ are roughly comparable, there are additional qualitative properties we can investigate. When $\gamma$ is small, the regularization parameter $\lambda$ influences the bias more strongly than the variance (i.e., the derivative of the normalized quantities in the range plotted is generally larger for the normalized squared bias). In contrast when $\gamma$ is large, the variance is influenced more.
\end{enumerate}

Next we consider how bias and variance change with $\gamma$ at the optimal $\lambda^*=\gamma\sigma^2/\alpha^2$. This can be viewed as the "pure" effects of dimensionality on the problem, keeping all other parameters fixed. Ineed, $\alpha^2/\sigma^2$ can be viewed as the signal-to-noise ratio (SNR), and is fixed. This analysis allows us to study for the best possible estimator (ridge regression, a Bayes estimator), behaves with the dimension. We refer to Figure \ref{fig: bias var optimal lambda}, where we make some specific choices of $\alpha$ and $\sigma$. 

\begin{enumerate}
\item Clearly the overall risk increases, as the problem becomes harder with increasing dimension. This is in line with our intuition.
\item The classical bias-variance tradeoff can be summarized by the equation
\begin{align*}
\mathrm{bias}^2(\lambda)+ \mathrm{var}(\lambda) \ge M^*(\alpha,\gamma),
\end{align*}
where we made explicit the dependence of the bias and variance on $\lambda$, and where $M^*(\alpha,\gamma)$ is the minimum MSE achievable, also known as the Bayes error, for which there are explicit formulas available \citep{tulino2004random,dobriban2018high}.

\item The variance first increases, then decreases  with $\gamma$. This shows that in the "classical" low-dimensional case, most of the risk is due to variance, while in the "modern" high-dimensional case, most of it is due to bias. This observation is consistent with other phenomena in proportional-limit asymptotics, for instance that the map between population and sample eigenvalue distributions is asymptotically deterministic \citep{marchenko1967distribution,bai2009spectral}. 
\end{enumerate}

\begin{figure}
\centering
\includegraphics[width=.45\textwidth]{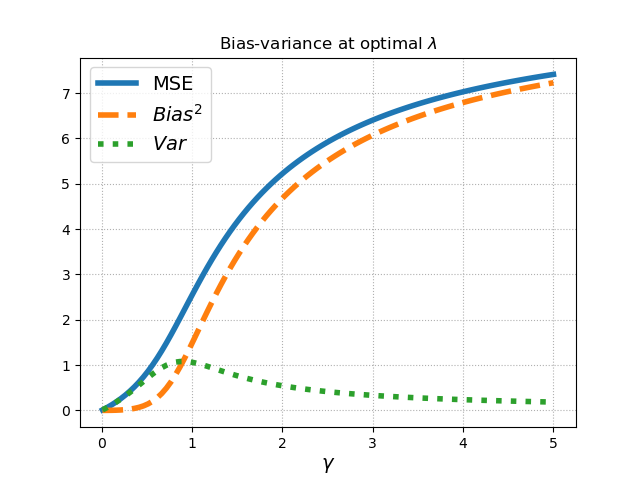}
\caption{Bias-variance tradeoff at optimal $\lambda^*=\gamma\sigma^2/\alpha^2$, when $\alpha=3, \sigma=1$.}
\label{fig: bias var optimal lambda}
\end{figure}


\subsection{Simulations with cross-validation}
\label{cvs}
See Figure \ref{Fig: CV_refit_test_error}. We consider both small and large $\gamma$. Our bias-correction procedure shrinks the $\lambda$ to the correct direction and decreases the test error.  It is also shown that the one-standard-error rule \citep[e.g.,][]{friedman2009elements} does not perform well here. 
\begin{figure}
\centering
\includegraphics[scale=0.45]{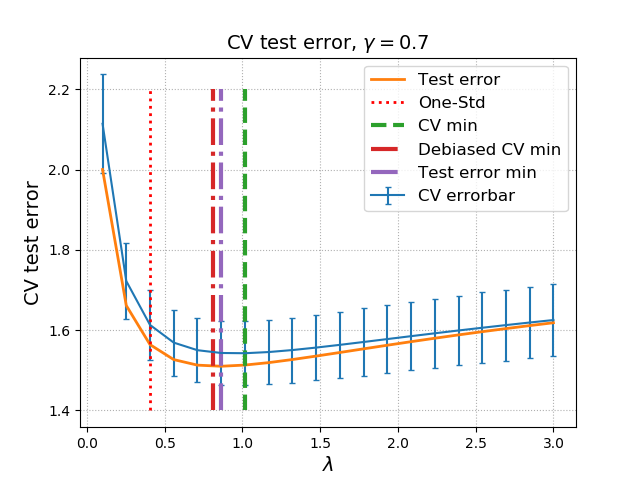}
\includegraphics[scale=0.45]{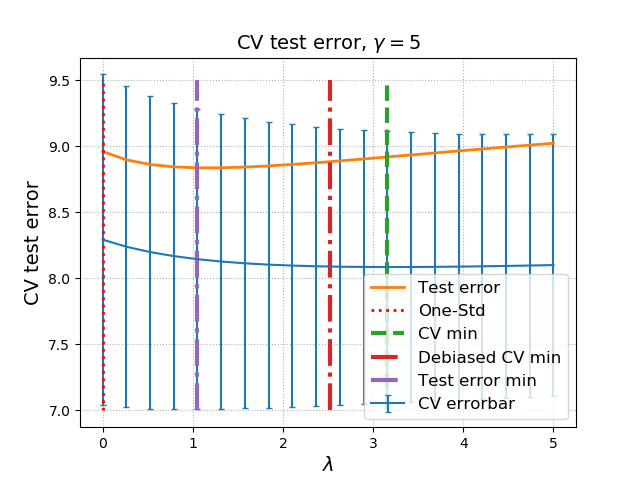}
\caption{Left: we generate a training set ($n=1000,p=700,\gamma=0.7,\alpha=\sigma=1$) and a test set ($n_{test}=500$) from the same distribution. We split the training set into $K=5$ equally sized folds and do cross-validation. The blue error bars plot the mean and standard error of the $K$ test errors. The red dotted line indicates the "one-standard-error" location. The green dashed line indicates the optimal $\lambda_{CV}^*$ obtained by $k$-fold cross-validation, while the red dashed-dotted line indicates the debiased version $\frac{K-1}{K}\lambda_{CV}^*$. The orange line plots the test error when training on the whole training set and fit on the whole test set, and the purple dashed-dotted line indicates the minimal $\lambda^*_{test}$. The test error is 1.513 at $\lambda^*_{CV}$ and 1.510 at $\frac{K-1}{K}\lambda_{CV}^*$. So the bias-correction decreases the test error by about 0.003. Right: we take $n=200, p=1000,\gamma=5,\alpha=3,\sigma=1$. The bias-correction decreases the test error from 8.92 to 8.89, so it decreases by 0.03.}
\label{Fig: CV_refit_test_error}
\end{figure}

\subsection{Choosing the regularization parameter- additional details}
\label{cvd}
Another possible prediction method is to use the average of the ridge estimators computed during cross-validation. Here it is also natural to use the CV-optimal regularization parameters, averaging $\hat{\beta}_{-k}(\hat\lambda_k^*)$, i.e.
\begin{align*}
\hat{\beta}_{avg}(\hat\lambda_k^*)=\frac{1}{K}\sum_{k=1}^K\hat{\beta}_{-k}(\hat\lambda_k^*).
\end{align*}
This has the advantage that it does not require refitting the ridge regression estimator, and also that we use the optimal regularization parameter.

\subsubsection{Train-test validation}

The same bias in the regularization parameter also applies to train-test validation.  Since the number of samples is changed when restricting to the training set, the optimal $\lambda$ chosen by train-test validation is also biased for the true regularization parameter minimizing the test error. 
We will later see in simulations (Figure \ref{fig: compare CV}) that retraining the ridge regression estimator on the whole data will still significantly improve the performance (this is expected based on our results on CV).
For prediction, here we can also use ridge regression on the training set. This effectively reduces sample size $n\to n_{train}$, where $n_{train}$ is the sample size of the training set.  However, if the training set grows such that $n/n_{train}\to 1$ while $n_{train}\to\infty$, the train-test split has asymptotically optimal  performance.


\subsubsection{Leave-one-out}
There is a special ``short-cut'' for leave-one-out in ridge regression, which saves us from burdensome computation. Write $loo(\lambda)$ for the leave-one-out estimator of prediction error with parameter $\lambda$. Instead of doing ridge regression $n$ times, we can calculate the error explicitly as
\begin{align*}
loo(\lambda)=\frac{1}{n}\sum_{i=1}^n
\left[\frac{Y_i-X_i^{\top}\hat{\beta}(\lambda)}{1-S_{ii}(\lambda)}
\right]^2.
\end{align*}
where $S(\lambda)=X(X^\top X+n\lambda I)^{-1}X^\top$. The minimizer of $loo(\lambda)$ is asymptotically optimal, i.e., it converges to $\lambda^*$ \citep{hastie2019surprises}. However, the computational cost of this shortcut is the same as that of a train-test split. Therefore, the method described above has the same asymptotic performance. 

\begin{figure}[h]
\centering
\includegraphics[width=.45\textwidth]{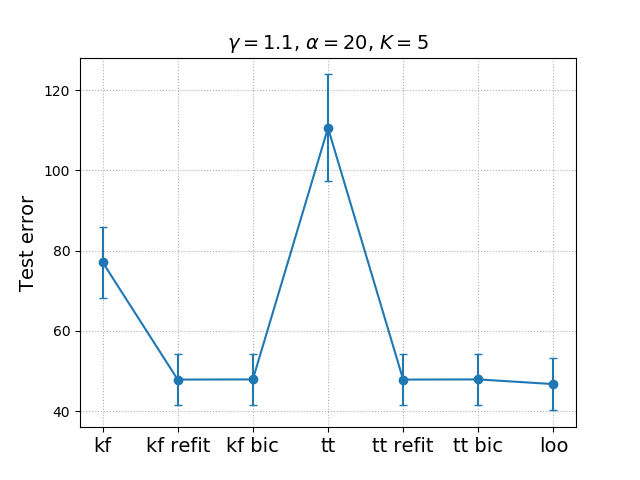}
\caption{Comparing different ways of doing cross-validation. We take $n=500$, $p=550$, $\alpha=20$, $\sigma=1$, $K=5$. As for train-test validation, we take $80\%$ of samples to be training set and the rest $20\%$ be test set. The error bars are the mean and standard deviation over 20 repetitions. }
\label{fig: compare CV}
\end{figure}

{\bf Simulations:} Figure \ref{fig: compare CV} shows simulation results comparing different cross-validation methods:
\begin{compactenum}
\item kf --- k-fold cross-validation by taking the average of the ridge estimators at the CV-optimal regularization parameter.
\item kf refit --- k-fold cross-validation by refitting ridge regression on the whole dataset using the CV-optimal regularization parameter.
\item kf bic --- k-fold cross-validation by refitting ridge regression on the whole dataset using the CV-optimal regularization parameter, with bias correction.
\item tt --- train-test validation, by using the ridge estimator computed on the train data, at the validation-optimal regularization parameter. Note: we expect this to be similar, but worse than the "kf" estimator.
\item tt refit --- train-test validation by refitting ridge regression on the whole dataset, using the validation-optimal regularization parameter.  Note: we expect this to be similar, but slightly worse than the "kf refit" estimator.
\item tt bic --- train-test validation by refitting ridge regression on the whole dataset using the CV-optimal regularization parameter, with bias correction.
\item loo --- leave-one-out
\end{compactenum}
Figure \ref{fig: compare CV} shows that the naive estimators (kf and tt) can be quite inaccurate without refitting or bias correction. However, if we either refit or bias-correct, the accuracy improves. In this case, there seems to be no significant difference between the various methods.

\subsection{Proof of Theorem \ref{thm: primal}}
\label{prf: primal}
\begin{proof}
Suppose $m/n\rightarrow\xi$ as $n$ goes to infinity. For $\hat\beta_p$, we have
\begin{align*}
\mathrm{bias}^2&=\EE{\left\|\left(X^\top L^\top LX/n+\lambda I_p\right)^{-1}X^\top X/n\beta-\beta\right\|_2^2},\\
\delta^2&=\sigma^2\EE{\left\|\left(X^\top L^\top LX/n+\lambda I_p\right)^{-1}n^{-1}X^\top\right\|_F^2}.
\end{align*}
Denote $M=\left(X^\top L^\top LX/n+\lambda I_p\right)^{-1}$, the resolvent of the sketched matrix. We further assume that $X$ has iid $\N(0,1)$ entries and $LL^\top=I_m$. Let $L_1$ be an orthogonal complementary matrix of $L$, such that $L^\top L + L_1^\top L_1 = I_n$. We also denote $N=\frac{X^\top L_1^\top L_1X}{n}$. Then
\begin{align*}
MX^\top X/n=& M\frac{X^\top L^\top LX+X^\top L_1^\top L_1X}{n}
=I_p-\lambda M+ MN.
\end{align*}
Therefore, using that $\Cov{\beta}=\alpha^2/p\cdot I_p$, we find the bias as
\begin{align*}
\mathrm{bias}^2&=\frac{\alpha^2}{p}\EE{\tr(M-I_p)(M^\top-I_p)}\\
&=\frac{\alpha^2}{p}\biggl\{\lambda^2\EE{\tr[ M^{2}]}+\EE{\tr M^{2}\frac{(X^\top L_1^\top L_1X)^2}{n^2}}-2\lambda\EE{\tr M^{2}N}\biggr\}.
\end{align*}
By the properties of Wishart matrices \citep[e.g.,][]{anderson1958introduction,muirhead2009aspects}, we have
\begin{align*}
\EE{N}&=\frac{n-m}{n}I_p,\\
\EE{(N)^2}&=\frac{1}{n^2}\EE{Wishart(I_p,n-m)^2}=\frac{1}{n^2}[n-m+p(n-m)+(n-m)^2]I_p.
\end{align*}
Recalling that $m,n\to\infty$ such that $m/n\to\xi$, and that $\theta_i(\gamma,\lambda)=\int(x+\lambda)^{-i}dF_{\gamma}(x)$,
\begin{align*}
\mathrm{bias}^2&=\frac{\alpha^2}{p}\left[\lambda^2+\frac{n-m+p(n-m)+(n-m)^2}{n^2}-2\lambda\frac{n-m}{n}\right]
\EE{\tr[ M^{2}]}\\
&\rightarrow\alpha^2[(\lambda+\xi-1)^2+\gamma(1-\xi)]\theta_2(\gamma, \xi,\lambda).
\end{align*}

Moreover,
\begin{align*}
\delta^2&=\frac{\sigma^2}{n^2}\EE{\tr[ M^{2}X^\top X]}\\
&=\frac{\sigma^2}{n}\cdot 
\biggl\{
\EE{\tr[ M]}-\lambda\EE{\tr[ M^{2}]}
+\EE{\tr[ M^{2}N]}
\biggr\}\\
&\rightarrow\gamma\sigma^2[\theta_1(\gamma, \xi,\lambda)-\lambda\theta_2(\gamma, \xi,\lambda)+(1-\xi)\theta_2(\gamma, \xi,\lambda)].
\end{align*}
Here we used the additional definitions 
\begin{align*}
&\theta_i(\gamma, \xi, \lambda)=\int\frac{1}{(\xi x+\lambda)^i}dF_{\gamma/\xi}(x)\\
&\theta_i(\gamma, \lambda)=\theta_i(\gamma, \xi=1, \lambda).
\end{align*}
Note that these can be connected to the previous definitions by
\begin{align*}
\theta_1(\gamma,\xi,\lambda)&=\frac{1}{\xi}\int\frac{1}{x+\lambda/\xi}dF_{\gamma/\xi}(x)=\frac{1}{\xi}\theta_1\left(\frac{\gamma}{\xi},\frac{\lambda}{\xi}\right)\\
\theta_2(\gamma,\xi,\lambda)&=\frac{1}{\xi^2}\theta_2\left(\frac{\gamma}{\xi},\frac{\lambda}{\xi}\right).
\end{align*}
Therefore the AMSE of $\hat\beta_p$ is
\begin{align*}
AMSE(\hat\beta_p)&=\alpha^2[(\lambda+\xi-1)^2+\gamma(1-\xi)]\theta_2(\gamma,\xi,\lambda)+\gamma\sigma^2[\theta_1(\gamma, \xi,\lambda)-(\lambda+\xi-1)\theta_2(\gamma, \xi,\lambda)]\\
&=\alpha^2[(\lambda+\xi-1)^2+\gamma(1-\xi)]\frac{1}{\xi^2}\theta_2\left(\frac{\gamma}{\xi},\frac{\lambda}{\xi}\right)\\
&+\gamma\sigma^2
\left[\frac{1}{\xi}\theta_1\left(\frac{\gamma}{\xi},\frac{\lambda}{\xi}\right)-(\lambda+\xi-1)\frac{1}{\xi^2}\theta_2\left(\frac{\gamma}{\xi},\frac{\lambda}{\xi}\right)\right].\numberthis
\label{eq: primal_amse}
\end{align*}

\end{proof}
\subsubsection{Isotropic case}
\label{iso}
Consider the special case where $\Gamma=I$, that is, $X$ has iid $\N(0,1)$ entries. Then $F_\gamma$ is the standard MP law, and we have the explicit forms for $\theta_i=\theta_i(\gamma, \lambda)=\int\frac{1}{(x+\lambda)^i}dF_\gamma$:
\begin{align*}
\theta_1(\gamma, \lambda)&=-\frac{1+\lambda}{\gamma\lambda}+\frac{1}{2\sqrt{\gamma}\lambda}[\sqrt{\gamma} + \frac{1+\lambda}{\sqrt{\gamma}}+\sqrt{(\sqrt{\gamma} + \frac{1+\lambda}{\sqrt{\gamma}})^2-4}],\\
\theta_2(\gamma, \lambda)&=-\frac{1}{\gamma\lambda^2}+\frac{\gamma+1}{2\gamma\lambda^2}-\frac{1}{2\sqrt{\gamma}}(\frac{\lambda+1}{\gamma}+1)\frac{1}{\lambda\sqrt{(\sqrt{\gamma} + \frac{1+\lambda}{\sqrt{\gamma}})^2-4}}+\frac{1}{2\sqrt{\gamma}}\sqrt{(\sqrt{\gamma} + \frac{1+\lambda}{\sqrt{\gamma}})^2-4}\frac{1}{\lambda^2},\\
\bar\theta_1(\zeta, \lambda)&=\zeta\theta_1(\zeta, \lambda)+\frac{1-\zeta}{\lambda},\\
\bar\theta_2(\zeta, \lambda)&=\zeta\theta_2(\zeta, \lambda)+\frac{1-\zeta}{\lambda^2},
\end{align*}
The results are obtained by the contour integral formula
\begin{align*}
\int f(x)dF_\gamma(x)=-\frac{1}{4\pi i}\oint_{|z|=1}\frac{f(|1+\gamma z|^2)(1-z^2)^2}{z^2(1+\sqrt{\gamma}z)(z+\sqrt{\gamma})}dz.
\end{align*}
See Proposition 2.10 of \cite{yao2015large}.

\subsection{Proof of Theorem \ref{thm: dual}}
\label{prf: dual}
\begin{proof}
Suppose $d/p\rightarrow\zeta$ as $n$ goes to infinity. For $\hat\beta_d$, we have
\begin{align*}
\mathrm{bias}^2&=\EE{\left\|n^{-1}X^\top\left(XRR^\top X^\top/n+\lambda I_n\right)^{-1}X\beta-\beta\right\|_2^2},\\
\delta^2&=\sigma^2\tr[\left(XRR^\top X^\top/n+\lambda I_n\right)^{-2}\frac{XX^\top}{n^2}].
\end{align*}
Denote $M=\left(XRR^\top X^\top/n+\lambda I_n\right)^{-1}$. Note that, using that $\Cov{\beta}=\alpha^2/p\cdot I_p$
\begin{align*}
\mathrm{bias}^2&=\frac{\alpha^2}{p}\EE{\tr[MXX^\top/n]^2}
-2\frac{\alpha^2}{p}\EE{\tr[MXX^\top/n]}+\frac{\alpha^2}{p}\tr(I_p).
\end{align*}
Moreover, letting $R_1$ to be an orthogonal complementary matrix of $R$, such that $RR^\top + R_1R_1^\top = I_n$, and $N=\frac{XR_1R_1^\top X^\top}{n}$, 

\begin{align*}
\EE{\frac{1}{p}\tr[MXX^\top/n]}&=\frac{1}{p}\tr[I_n-\lambda\EE{\tr[M]}+
\EE{MN}]\\
&\rightarrow\frac{1}{\gamma}-\frac{\lambda}{\gamma}\int\frac{1}{x+\lambda}d\bar F_\zeta(x)+\frac{\gamma-\zeta}{\gamma}\int\frac{1}{x+\lambda}d\bar F_\zeta(x),
\end{align*}
where $\bar F_\zeta$ is the companion MP law, that is, $\bar F_\zeta=(1-\gamma)\delta_0+\gamma F_\zeta$. The third term calculated by using that $XR$ and $XR_1$ are independent for a Gaussian random matrix $X$, so that $M,N$ are independent, and that $\EE{N}=\frac{p-d}{n}I_n$. Thus
\begin{align*}
\EE{\frac{1}{p}\tr[MXX^\top/n]}&\rightarrow\frac{1}{\gamma}-\frac{\lambda+\zeta-\gamma}{\gamma}\bar\theta_1(\zeta,\lambda)\\
=&\frac{1}{\gamma}-\frac{\lambda+\zeta-\gamma}{\gamma}\left[\frac{1-\zeta}{\lambda}+\zeta\theta_1(\zeta,\lambda)\right].
\end{align*}

 Then
\begin{align*}
&\EE{\frac{1}{p}\tr[MXX^\top/n]^2}=\frac{1}{p}\EE{\tr[I_n+\lambda^2M^{2}+MNMN-2\lambda M+2MN-\lambda M^2N-\lambda MNM}.
\end{align*}
Note that
\begin{align*}
&\EE{MNMN|M}=M[(p-d)(M^\top+\tr(M)I_n)+(p-d)^2M]/n^2\\
&=\frac{p-d+(p-d)^2}{n^2}M^2+\frac{p-d}{n^2}\tr(M)M,
\end{align*}
so 
\begin{align*}
\EE{\frac{1}{p}\tr[MXX^\top/n]^2}&\rightarrow\frac{1}{\gamma}[1+(\lambda^2-2\lambda(\gamma-\zeta)+(\gamma-\zeta)^2)\bar\theta_2(\zeta,\lambda)\\
&\quad +2(\gamma-\zeta-\lambda)\bar\theta_1(\zeta,\lambda)+(\gamma-\zeta)\bar\theta_1^2(\zeta,\lambda)].
\end{align*}
Thus we find the following exprssion for the limiting squared bias:
\begin{align*}
\mathrm{bias}^2&\rightarrow\frac{\alpha^2}{\gamma}[\gamma-1+(\lambda-\gamma+\zeta)^2\bar\theta_2+(\gamma-\zeta)\bar\theta_1^2].
\end{align*}

With similar calculations (that we omit for brevity), we can find
\begin{align*}
\delta^2\rightarrow\sigma^2(\bar\theta_1(\zeta,\lambda)-(\lambda+\zeta-\gamma)\bar\theta_2(\zeta,\lambda)).
\end{align*}
Therefore the AMSE of $\hat\beta_d$ is
\begin{align}
AMSE&=\frac{\alpha^2}{\gamma}[\gamma-1+(\lambda-\gamma+\zeta)^2\bar\theta_2+(\gamma-\zeta)\bar\theta_1^2] + \sigma^2[\bar\theta_1(\zeta,\lambda)-(\lambda+\zeta-\gamma)\bar\theta_2(\zeta,\lambda)].
\label{eq:dual_amse}
\end{align}
\end{proof}

\subsection{Proof of Theorem \ref{thm: marginal}}
\label{prf: marginal}
\begin{proof}
Recall that we have $m,n\to\infty$, such that $m/n\to\xi$. Then we need to take $\xi\to 0$. However, we find it more convenient to do the calculation directly from the finite sample results as $m,n,p\to\infty$ with $m/n\to 0$, $p/n\to\gamma$, It is not hard to check that computing the results in the other way (i.e., interchanging the limits), leads to the same results. Starting from our bias formula for primal sketching, we first get
\begin{align*}
\mathrm{bias}^2&
=\frac{\alpha^2}{p}\left[\lambda^2+\frac{n-m+p(n-m)+(n-m)^2}{n^2}-2\lambda\frac{n-m}{n}\right]
\EE{\tr[\left(X^\top L^\top LX/n+\lambda I_p\right)^{-2}]}\\
&\rightarrow\alpha^2[(\lambda-1)^2+\gamma]/\lambda^2.
\end{align*}
The limit of the trace term is not entirely trivial, but it can be calculated by (1) observing that the $m\times p$ sketched data matrix $P=LX$ has iid normal entries (2) thus the operator norm of $P^\top P/n$ vanishes, (3) and so by a simple matrix perturbation argument the trace concentrates around $p/\lambda^2$. This gives the rough steps of finding the above limit.
Moreover,
\begin{align*}
\delta^2&=\frac{\sigma^2}{n^2}\EE{\tr[\left(X^\top L^\top LX/n+\lambda I_p\right)^{-2}X^\top X]}\rightarrow\gamma\sigma^2/\lambda^2 \cdot \E_{F_\gamma} X^2
=\gamma\sigma^2/\lambda^2 
\end{align*}
So the MSE is $M(\lambda)=\alpha^2[(\lambda-1)^2+\gamma]/\lambda^2
+\sigma^2 \cdot \gamma/\lambda^2$. From this it is elementary to find the optimal $\lambda$ and its objective value.
\end{proof}

\subsection{Proof of Theorem \ref{thm: bias dual gaussian}}
\label{prf: bias dual gaussian}
\begin{proof}
Note that the bias can be written as
\begin{align*}
\mathrm{bias}^2&=\frac{\alpha^2}{p}\EE{\tr[\left(\frac{XRR^\top X^\top}{nd}+\lambda I_n\right)^{-1}\frac{XX^\top}{nd}]^2}\\
&-2\frac{\alpha^2}{p}\EE{\tr[\left(XRR^\top X^\top/n+\lambda I_n\right)^{-1}XX^\top/n]}+\alpha^2.
\end{align*}
Write $G=XX^\top$. Since $RR^\top\sim \cw_p(I_p,d)$, we have $XRR^\top X^\top\sim \cw_n(G,d)$. So $XRR^\top X^\top\stackrel{d}{=}G^{1/2}WG^{1/2}$, where $W\sim\cw_n(I_n,d)$.
\begin{align*}
\EE{\tr[\left(\frac{XRR^\top X^\top}{nd}+\lambda I_n\right)^{-1}XX^\top/n]}&=\EE{\tr[(G^{1/2}WG^{1/2}/d+n\lambda I_n)^{-1}G]}\\
&=\EE{\tr[(\frac{W}{d}+\lambda (\frac{G}{n})^{-1})^{-1}]}.
\end{align*}
So we need to find the law of $\frac{W}{d}+\frac{\lambda}{\gamma}(\frac{G}{p})^{-1}$. Suppose first that $G=XX^\top\sim\cw_n(I_n,p)$. Then $W$ and $G^{-1}$ are asymptotically freely independent. The l.s.d. of $W/d$ is the MP law $F_{1/\xi}$ while the l.s.d. of $G/p$ is the MP law $F_{1/\gamma}$. We need to find the additive free convolution $W\boxplus \bar G$, where $\bar G=\frac{\lambda}{\gamma}G^{-1}$.


Recall that the $R$-transform of a distribution $F$ is defined by
\begin{align*}
R_F(z)=m_F^{-1}(-z)-\frac{1}{z},
\end{align*}
where $m_F^{-1}(z)$ is the inverse function of the Stieltjes transform of $F$ \citep[e.g.,][]{voiculescu1992free,hiai2006semicircle,couillet2011random}. We can find the $R$-transform by solving 
\begin{align*}
m_F(R_F(z)+\frac{1}{z})=-z.
\end{align*}

Note that the $R$-transform of $W/d$ is
\begin{align*}
R_{W}(z)=\frac{1}{1-z/\xi}.
\end{align*}

The Stieltjes transform of $G^{-1}$ is
\begin{align*}
m_{G^{-1}}(z)&=\int\frac{1}{1/x-z}dF_{1/\gamma}(x)=-\frac{1}{z}-\frac{1}{z^2}m_{1/\gamma}(\frac{1}{z})\\
&=-\frac{1}{z}-\frac{1-\frac{1}{\gamma}-\frac{1}{z}+\sqrt{(1+\frac{1}{\gamma}+\frac{1}{z})^2-\frac{4}{\gamma}}}{2\frac{z}{\gamma}}\\
&=-\frac{1+\frac{1}{\gamma}-\frac{1}{z}+\sqrt{(1+\frac{1}{\gamma}-\frac{1}{z})^2-\frac{4}{\gamma}}}{2\frac{z}{\gamma}}.
\end{align*}

Then the $R$-transform of $G^{-1}$ is
\begin{align*}
R_{G^{-1}}(z)&=-\frac{1}{z}+\frac{\gamma+1-\sqrt{(\gamma+1)^2-4\gamma(z+1)}}{2z}\\
&=\frac{\gamma-1-\sqrt{(\gamma-1)^2-4\gamma z}}{2z}.
\end{align*}

Since we have the property that $R_{a\mu}(z)=aR_{\mu}(az)$, 
\begin{align*}
R_{\bar G}=R_{\frac{\lambda}{\gamma} G^{-1}}(z)=\frac{\gamma-1-\sqrt{(\gamma-1)^2-4\lambda z}}{2 z}.
\end{align*}

Hence we have
\begin{align*}
R_{W\boxplus\bar{G}}=R_W+R_{\bar{G}}=\frac{1}{1-z/\xi}+\frac{\gamma-1-\sqrt{(\gamma-1)^2-4\lambda z}}{2 z}.
\end{align*}
Moreover, the Stieltjes transform of $\mu=W\boxplus\bar G$ satisfies
\begin{align*}
m^{-1}_{\mu}(z)=m_{W\boxplus\bar G}^{-1}(z)=R_F(-z)-\frac{1}{z}=\frac{1}{1+z/\xi}+\frac{\gamma-1-\sqrt{(\gamma-1)^2+4\lambda z}}{-2 z}-\frac{1}{z}.
\end{align*}

Note that 
\begin{align*}
&2\frac{\alpha^2}{p}\EE{\tr[\left(\frac{XRR^\top X^\top}{nd}+\lambda I_n\right)^{-1}XX^\top/n]}\rightarrow 2\frac{\alpha^2}{\gamma}\EE[\mu]{\frac{1}{x}}=2\frac{\alpha^2}{\gamma}\lim_{z\rightarrow 0}m(z),\\
&\frac{\alpha^2}{p}\EE{\tr[\left(\frac{XRR^\top X^\top}{nd}+\lambda I_n\right)^{-1}\frac{XX^\top}{nd}]^2}\rightarrow\frac{\alpha^2}{\gamma}\EE[\mu]{\frac{1}{x^2}}=\frac{\alpha^2}{\gamma}\lim_{z\rightarrow0}\frac{d}{dz}m(z).
\end{align*}
So it suffices to find $m(z)$ and $\frac{d}{dz}m(z)$ evaluated at zero.
\end{proof}

This result can characterize the performance of sketching in the high SNR regime, where $\alpha\gg \sigma$. To understand the lower SNR regime, we need to study the variance, and thus we need to calculate
\begin{align*}
\mathrm{var}&=\sigma^2\frac{1}{n} \EE{\tr[\left(\frac{XRR^\top X^\top}{nd}+\lambda I_n\right)^{-2}XX^\top/n]}
=\sigma^2\EE{\tr[(\frac{W}{d}+\frac{\lambda}{\gamma}(\frac{G}{p})^{-1})^{-2}G^{-1}]
}
\end{align*}
where $G=XX^\top\sim\cw_n(I_n,p)$ is a Wishart distribution, and $XRR^\top X^\top =_d G^{1/2} W G^{1/2}$, with $W\sim\cw_n(I_n,r)$. This seems to be quite challenging, and we leave it to future work.

\begin{figure}
\centering
\includegraphics[scale=0.45]{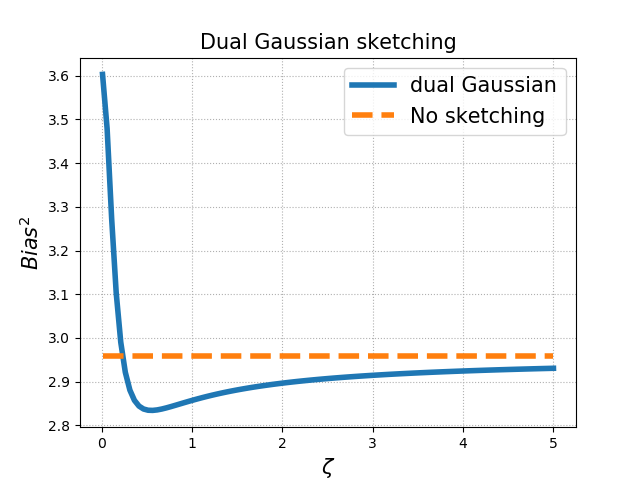}
\caption{Dual Gaussian sketch improves MSE.}
\label{fig: dual gaus improve}
\end{figure}

\subsection{Results for primal Gaussian sketching}
The statement requires some notions from free probability, see e.g., \cite{voiculescu1992free,hiai2006semicircle,nica2006lectures,anderson2010introduction,couillet2011random} for references .

\begin{theorem}[Bias of primal Gaussian sketch]
Suppose $X$ is an $n\times p$ standard Gaussian random matrix. Suppose also that $L$ is a $d\times n$ matrix with i.i.d. $\N(0,1/d)$ entries. Then the bias of primal sketch has the expression 
$MSE(\hat{\beta}_p)=\alpha^2+\frac{\alpha^2}{\gamma}[\tau((a+b)^{-1}b(a+b)^{-1}b^{-1})-2\tau((a+b)^{-1})]$, 
where $a$ and $b$ two free random variables, that are freely independent in a non-commutative probability space, and $\tau$ is their trace. Specifically, the law of $a$ is the MP law $F_{1/\xi}$ and $b=\frac{\lambda}{\gamma}\tilde{b}$, where the law of $\tilde{b}$ is the MP law $F_{1/\gamma}$.
\label{thm: bias primal gaussian}
\end{theorem}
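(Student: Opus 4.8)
The plan is to follow the architecture of the proof of Theorem~\ref{thm: bias dual gaussian}, reducing the (high-SNR) MSE to normalized traces of matrix functions and then passing to the limit with free probability, but with one essential new complication that I flag at the end. Since $\E\beta=0$ and $\Cov{\beta}=(\alpha^2/p)I_p$, I would first write, exactly as in Theorem~\ref{thm: primal}, $\mathrm{bias}^2=\frac{\alpha^2}{p}\E\tr[(W_p-I_p)(W_p-I_p)^\top]$ with $W_p=\left(X^\top L^\top LX/n+\lambda I_p\right)^{-1}X^\top X/n$, so that $MSE(\hat\beta_p)=\alpha^2-\tfrac{2\alpha^2}{p}\E\tr[W_p]+\tfrac{\alpha^2}{p}\E\tr[W_pW_p^\top]$. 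The key structural observation is that $W_p$ is a product of two non-commuting symmetric matrices and is therefore \emph{not} symmetric; hence $\tr[W_pW_p^\top]\neq\tr[W_p^2]$, which is precisely why the quadratic term will become a genuinely mixed trace rather than a spectral moment of a single operator (contrast with the dual case, where $H=n^{-1}X^\top(\cdots)^{-1}X$ is symmetric).

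Next I would decouple the two sources of randomness by conditioning on $X$. The rows of $LX$ are i.i.d.\ $\N(0,\tfrac1d X^\top X)$, so $X^\top L^\top LX \stackrel{d}{=} \tfrac1d (X^\top X)^{1/2} W (X^\top X)^{1/2}$ with $W\sim\cw_p(I_p,d)$ independent of $X$. Writing $C=X^\top X$ and $D=\left(\tfrac1d W+\lambda(\tfrac1n C)^{-1}\right)^{-1}$ and factoring $(X^\top X)^{1/2}$ out of the resolvent turns the multiplicative structure into an additive one inside the inverse, yielding the similarity $W_p=C^{-1/2}DC^{1/2}$. Consequently $\tr[W_p]=\tr[D]$ and, using $C/n=\mathsf s$, $\tr[W_pW_p^\top]=\tr[D\,\mathsf s\,D\,\mathsf s^{-1}]$, where $\mathsf s=\tfrac1n X^\top X$.

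I would then invoke asymptotic freeness: $\tfrac1d W$ (a bi-unitarily invariant Wishart, whose l.s.d.\ is a Marchenko--Pastur law) and $(\tfrac1n X^\top X)^{-1}$ (independent of $W$, with l.s.d.\ the pushforward of the MP law under $x\mapsto 1/x$) are asymptotically free, so in the limiting non-commutative probability space with trace $\tau$ there are freely independent elements $a$ (the sketched-Wishart part) and $b=\lambda\,\mathsf s^{-1}$ (the inverse part, i.e.\ $\mathsf s\to\lambda b^{-1}$). Then $\tfrac1p\tr[D]\to\tau((a+b)^{-1})$ and $\tfrac1p\tr[D\mathsf s D\mathsf s^{-1}]\to\tau((a+b)^{-1}b(a+b)^{-1}b^{-1})$, the latter after rewriting $\mathsf s=\lambda b^{-1}$ and using that $\tau(PQ^{-1}PQ)=\tau(PQPQ^{-1})$ for symmetric $P,Q$. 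The first-order term $\tau((a+b)^{-1})$ is obtained just as in the dual proof: compute $R_a$ and $R_b$, use $R_{a\boxplus b}=R_a+R_b$, invert to get the Stieltjes transform $m$ of $a\boxplus b$, and evaluate at $0$. Matching these limits to the statement's free additive convolution of the Marchenko--Pastur laws (with the scaling $b=\tfrac\lambda\gamma\tilde b$) is then the standard companion/reparametrization bookkeeping for MP laws, which also accounts for the $1/\gamma$ trace-normalization factor.

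The hard part, and the genuinely new ingredient relative to Theorem~\ref{thm: bias dual gaussian}, is the mixed trace $\tau((a+b)^{-1}b(a+b)^{-1}b^{-1})$. Because $W_p$ is not symmetric, the telescoping $G^{1/2}G^{-1/2}=I$ that collapsed the dual quadratic term to the pure second inverse moment $\tau(\mu^{-2})$ does \emph{not} occur here; this quantity interleaves $b$ and $b^{-1}$ around two copies of the resolvent $(a+b)^{-1}$ and is therefore \emph{not} a functional of the spectral distribution of $a+b$ alone, so knowledge of $m$ alone is insufficient. I expect this to be the crux of the proof, and I would attack it with subordination functions for the free additive convolution (equivalently operator-valued free probability), or by differentiating a two-parameter resolvent such as $\tau((a+tb-z)^{-1})$ in $t$ and $z$ and using freeness to extract the mixed moment; carrying this evaluation through to a closed form in terms of $m$ and its derivative is where the real technical effort lies.
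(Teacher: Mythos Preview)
Your algebra on the $p\times p$ side is sound and the similarity $W_p=s^{-1/2}Ds^{1/2}$ correctly yields $\tr W_p=\tr D$ and $\tr W_pW_p^\top=\tr[DsDs^{-1}]$. Two things, however, do not line up with the paper.

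First, the parameters. Conditioning on $X$ gives $\tilde W\sim\cw_p(I_p,d)$, so $\tilde W/d$ has limiting spectral law $F_{p/d}=F_{\gamma/\xi}$, not $F_{1/\xi}$; likewise $s=X^\top X/n$ has law $F_\gamma$, so your $b=\lambda s^{-1}$ is $\lambda$ times an inverse-$F_\gamma$ element, not $\tfrac{\lambda}{\gamma}$ times an $F_{1/\gamma}$-type element; and the prefactor you obtain is $\alpha^2\cdot\tfrac1p\tr[\cdot]$, with no $1/\gamma$. This is not mere ``companion/reparametrization bookkeeping'': the specific laws $F_{1/\xi}$, $F_{1/\gamma}$ and the factor $1/\gamma$ in the statement come from a different first move. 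The paper uses the push-through identity $(X^\top L^\top LX/n+\lambda I_p)^{-1}X^\top=X^\top(L^\top LXX^\top/n+\lambda I_n)^{-1}$ to transfer everything to the $n\times n$ side, where $W=L^\top L\sim\cw_n(I_n,d)$ and $G=XX^\top\sim\cw_n(I_n,p)$ are independent Wisharts with exactly the aspect ratios $1/\xi$ and $1/\gamma$, and $\tfrac{\alpha^2}{p}\tr=\tfrac{\alpha^2}{\gamma}\cdot\tfrac1n\tr$. Your route proves an equivalent but differently parametrized formula, not the theorem as stated.

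Second, you over-scope the target. The theorem does \emph{not} reduce the mixed trace $\tau((a+b)^{-1}b(a+b)^{-1}b^{-1})$ to a closed form in $m$ and $m'$; it leaves it as a free-probability expression. So subordination functions and two-parameter resolvents are unnecessary here. What the paper actually does for the quadratic term---and what your outline glosses over when it jumps from $\tfrac1n\tr[\cdots]$ directly to $\tau[\cdots]$---is to justify convergence of this \emph{non-polynomial} functional: it rewrites $(A+B)^{-1}B(A+B)^{-1}B^{-1}=A^{-1}(I+BA^{-1})^{-1}BA^{-1}(I+BA^{-1})^{-1}B^{-1}$, expands each $(I+BA^{-1})^{-1}$ as a von Neumann series, applies asymptotic freeness term by term to the resulting alternating words in $A^{-1}$ and $B$, and resums in the free limit to recover $\tau((a+b)^{-1}b(a+b)^{-1}b^{-1})$. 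That elementary series argument, not subordination, is the actual crux.
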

\begin{proof}[Proof of Theorem \ref{thm: bias primal gaussian}]
Note that
\begin{align*}
\mathrm{bias}^2&=\EE{\left\|\left(X^\top L^\top LX/(nd)+\lambda I_p\right)^{-1}(X^\top X/n)\beta-\beta\right\|_2^2},
\end{align*}
and $\left(X^\top L^\top LX/(nd)+\lambda I_p\right)^{-1}X^\top=X^\top(L^\top LXX^\top/(nd)+\lambda I_n)^{-1}$. Thus
\begin{align*}
\mathrm{bias}^2&=\EE{\left\|X^\top(\frac{L^\top LXX^\top}{nd}+\lambda I_n)^{-1}\frac{X}{n}\beta-\beta\right\|^2_2}\\
&=\alpha^2+\frac{\alpha^2}{p}
\E[\tr[(\frac{XX^\top L^\top L}{nd}+\lambda I_n)^{-1}\frac{XX^\top}{n}(\frac{L^\top LXX^\top}{nd}+\lambda I_n)^{-1}\frac{XX^\top}{n}]\\
&-2\tr[(\frac{L^\top LXX^\top}{nd}+\lambda I_n)^{-1}\frac{XX^\top}{n}]].
\end{align*}
First we find the l.s.d. of $(\frac{L^\top LXX^\top}{nd}+\lambda I_n)^{-1}\frac{XX^\top}{n}$. Write $W=L^\top L$, $G=XX^\top$. Then
\begin{align*}
(\frac{L^\top LXX^\top}{nd}+\lambda I_n)^{-1}\frac{XX^\top}{n}&=(\frac{WG}{nd}+\lambda I_n)^{-1}\frac{G}{n}=G^{-1}(\frac{W}{d}+\lambda(\frac{G}{n})^{-1})^{-1}G,
\end{align*}
which is similar to $(\frac{W}{d}+\lambda(\frac{G}{n})^{-1})^{-1}$. So it suffices to find the l.s.d. of $(\frac{W}{d}+\frac{\lambda}{\gamma}(\frac{G}{p})^{-1})^{-1}$.

By the definition, $W\sim\cw_{n}(I_n,d)$, $G\sim\cw_{n}(I_n,p)$, therefore the l.s.d. of $W/d$ converges to the MP law $F_{1/\xi}$ and the l.s.d. of $G/p$ converges to the MP law $F_{1/\gamma}$. 


Also note that
\begin{align*}
(\frac{XX^\top L^\top L}{nd}+\lambda I_n)^{-1}\frac{XX^\top}{n}(\frac{L^\top LXX^\top}{nd}+\lambda I_n)^{-1}\frac{XX^\top}{n}=(\frac{W}{d}+\lambda nG^{-1})^{-1}G^{-1}(\frac{W}{d}+\lambda nG^{-1})^{-1}G.
\end{align*}
We write $A=\frac{W}{d}$, $B=\frac{\lambda}{\gamma}(\frac{G}{p})^{-1}$. Then it suffices to find
\begin{align*}
\frac{\alpha^2}{p}\EE{\tr[(A+B)^{-1}B(A+B)^{-1}B^{-1}]}.
\end{align*}
We will find an expression for this using free probability. For this we will need to use some series expansions. There are two cases, depending on whether the operator norm of $BA^{-1}$ is less than or greater than unity, leading to different series expansions. We will work out below the first case, but the second case is similar and leads to the same answer.
\begin{align*}
\tr[(A+B)^{-1}B(A+B)^{-1}B^{-1}]
&=\tr[A^{-1}(I+BA^{-1})^{-1}BA^{-1}(I+BA^{-1})^{-1}B^{-1}]
\end{align*}
Since the operator norm of $BA^{-1}$ is less unity, we have the von Neumann series expansion
\begin{align*}
[I+BA^{-1}]^{-1}=\sum_{i=0}^\infty (-BA^{-1})^i,
\end{align*}
then we have
\begin{align*}
\tr[(A+B)^{-1}B(A+B)^{-1}B^{-1}]&=\sum_{i,j\geq 0}(-1)^{i+j}\tr[(BA^{-1})^{i+j+1}B^{-1}A^{-1}]\\
&=\sum_{i,j\geq 0}(-1)^{i+j}\tr[(A^{-1}B)^{i+j+1}A^{-1}B^{-1}].
\end{align*}

Since $A$ and $B$ are asymptotically freely independent in the free probability space arising in the limit \citep[e.g.,][]{voiculescu1992free,hiai2006semicircle,couillet2011random}, and the polynomial $(a^{-1}b)^{i+j+1}a^{-1}b^{-1}$ involves an alternating sequence of $a,b$, we have
\begin{align*}
\frac{1}{n}\tr[(A^{-1}B)^{i+j+1}A^{-1}B^{-1}]\rightarrow \tau[(a^{-1}b)^{i+j+1}a^{-1}b^{-1}],
\end{align*}
where $a$ and the $b$ are free random variables and $\tau$ is their law. Specifically, $a$ is a free random variable with the MP law $F_{1/\xi}$ and $b$ is $\frac{\lambda}{\gamma}\tilde{b}^{-1}$, where $\tilde{b}$ is a free r.v. with MP law $F_{1/\gamma}$. Moreover, they are freely independent.

Hence, we have
\begin{align*}
\frac{1}{n}\tr[(A+B)^{-1}B(A+B)^{-1}B^{-1}]&\rightarrow 
\tau[\sum_{i\geq0}(-1)^{i}(a^{-1}b)^{i+1}\sum_{j\geq0}(-1)^j(a^{-1}b)^{j}a^{-1}b^{-1}]\\
&=\tau[(a^{-1}b)(1+a^{-1}b)^{-1}(1+a^{-1}b)^{-1}a^{-1}b^{-1}]\\
&=\tau[(a+b)^{-1}b(a+b)^{-1}b^{-1}].
\end{align*}

Therefore,
\begin{align*}
\mathrm{bias^2}&\rightarrow \alpha^2+\frac{\alpha^2}{\gamma}\left[\frac{1}{n}\tr[(A+B)^{-1}B(A+B)^{-1}B^{-1}-2\tr[A+B]^{-1}]
\right]\\
&=\alpha^2+\frac{\alpha^2}{\gamma}[\tau((a+b)^{-1}b(a+b)^{-1}b^{-1})-2\tau((a+b)^{-1})].
\end{align*}




\end{proof}

\subsection{Results for full sketching}
\label{fs}
The full sketch estimator projects down the entire data, and then does ridge regression on the sketched data. It has the form
\begin{align*}
\hat\beta_f=\left(X^\top L^\top LX/n+\lambda I_p\right)^{-1}\frac{X^\top L^\top LY}{n}.
\end{align*}
We have
\begin{align*}
\mathrm{bias}^2&=\alpha^2\lambda^2\int\frac{1}{\xi x+\lambda}dF_{\gamma/\xi}(x)=\alpha^2\lambda^2\frac{1}{\xi^2}\theta_2\left(\frac{\gamma}{\xi},\frac{\lambda}{\xi}\right)\\
\mathrm{var}&=\sigma^2\gamma
\left[\int\frac{1}{\xi x+\lambda}dF_{\gamma/\xi}(x)-\lambda\int\frac{1}{(\xi x+\lambda)^2}dF_{\gamma/\xi}(x)\right]\\
&=\sigma^2\gamma
\left[\frac{1}{\xi}\theta_1\left(\frac{\gamma}{\xi},\frac{\lambda}{\xi}\right)-\lambda\frac{1}{\xi^2}\theta_2\left(\frac{\gamma}{\xi},\frac{\lambda}{\xi}\right)\right],
\end{align*}
therefore
\begin{align*}
AMSE(\hat\beta_f)&=\alpha^2\lambda^2\frac{1}{\xi^2}\theta_2\left(\frac{\gamma}{\xi},\frac{\lambda}{\xi}\right)
+\sigma^2\gamma\left[\frac{1}{\xi}\theta_1\left(\frac{\gamma}{\xi},\frac{\lambda}{\xi}\right)-\lambda\frac{1}{\xi^2}\theta_2\left(\frac{\gamma}{\xi},\frac{\lambda}{\xi}\right)\right]
\end{align*}

The optimal $\lambda$ for full sketch is always $\lambda^*=\frac{\gamma\sigma^2}{\alpha^2}$, the same as ridge regression. Some simulation results are shown in Figure \ref{Fig: full_sketch_mse}, and they show the expected shape (e.g., they decrease with $\xi$).
\begin{figure}
\centering
\includegraphics[scale=0.5]{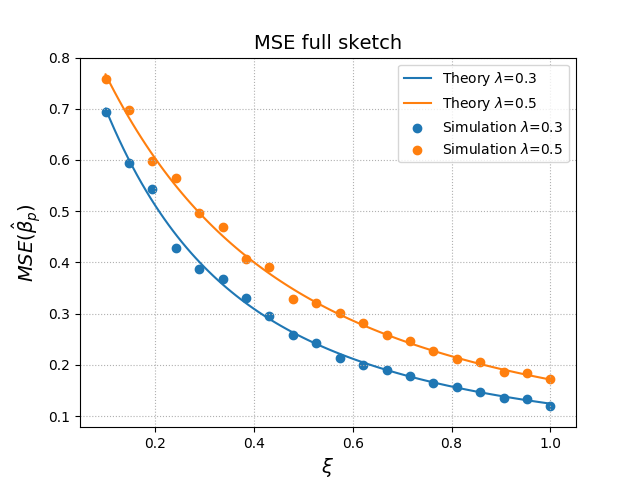}
\caption{Simulation results for full sketch, with $n=1000$, $\gamma=0.1$. The simulation results are averaged over 30 independent experiments.}
\label{Fig: full_sketch_mse}
\end{figure}

\subsection{Numerical results}
\label{sec: simu}

\subsubsection{Dual orthogonal sketching}
See Figure \ref{fig: dual sketch} for additional simulation results for dual orthogonal sketching.
\begin{figure}
\centering
\begin{subfigure}{.48\textwidth}
\includegraphics[scale=0.45]{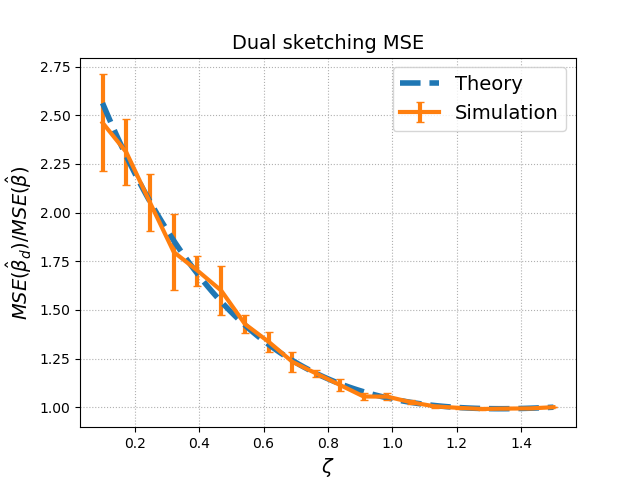}
\end{subfigure}
\begin{subfigure}{.48\textwidth}
\includegraphics[scale=0.45]{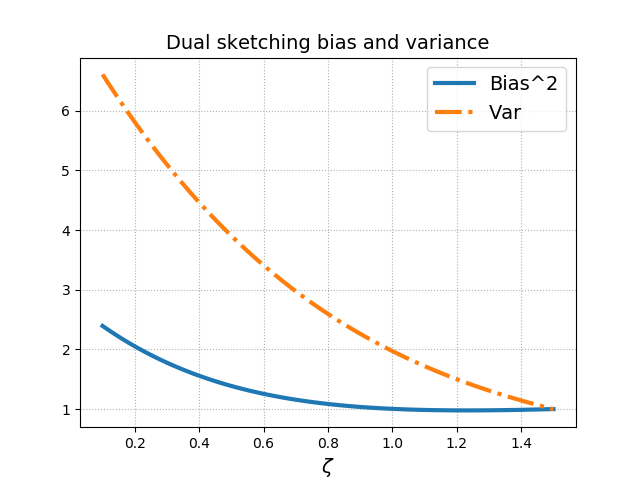}
\end{subfigure}
\caption{Dual orthogonal sketching with $\gamma=1.5, \lambda=1, \alpha=3, \sigma=1$. Left: MSE of dual sketching normalized by the MSE of ridge regression. The standard deviation is over 50 repetitions. Right: Bias and variance of dual sketching normalized by the bias and variance of ridge regression, respectively.}
\label{fig: dual sketch}
\end{figure}

\subsubsection{Performance at a fixed regularization parameter}

\begin{figure}
\begin{subfigure}{.33\textwidth}
\includegraphics[scale=0.33]{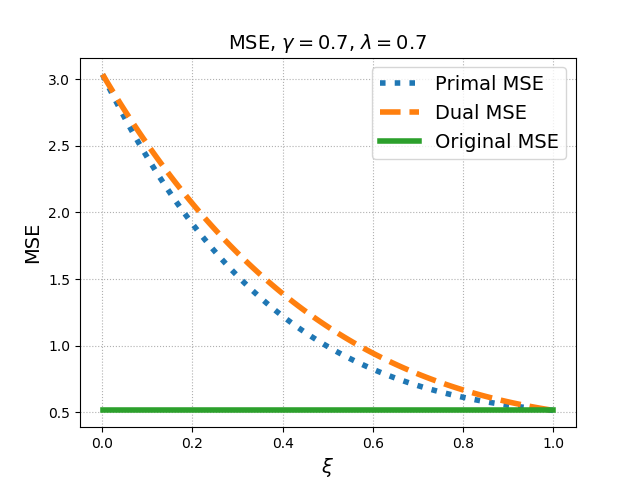}
\end{subfigure}
\begin{subfigure}{.33\textwidth}
\includegraphics[scale=0.33]{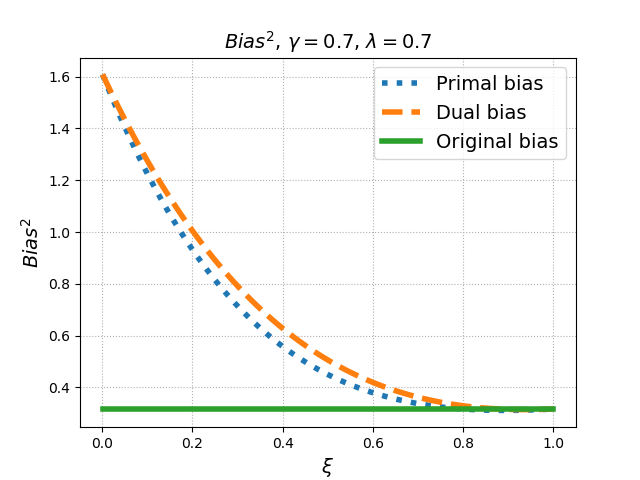}
\end{subfigure}
\begin{subfigure}{.33\textwidth}
\includegraphics[scale=0.33]{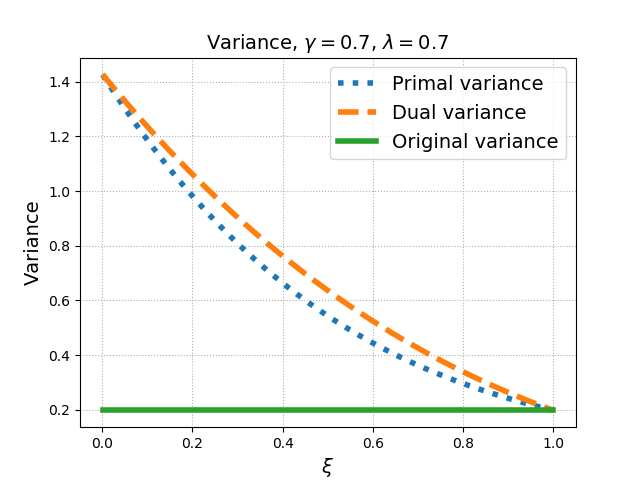}
\end{subfigure}
\caption{Fixed regularization parameter $\lambda=0.7$, optimal for original ridge, in a setting where $\gamma=0.7$, and $\alpha^2=\sigma^2$.}
\label{Fig: primal_dual_fix_lambda}
\end{figure}

First we fix the regularization parameter at the optimal value for original ridge regression. The results are visualized in Figure \ref{Fig: primal_dual_fix_lambda}. On the $x$ axis, we plot the reduction in sample size $m/n$ for primal sketch, and the reduction in dimension $d/p$ for dual sketch. In this case, primal and dual sketch will increase both bias and variance, and empirically in the current case, dual sketch increases them more. So in this particular case, primal sketch is preferred.

\subsubsection{Performance at the optimal regularization parameter}
\label{sec: optimal lambda}
We find the optimal regularization parameter $\lambda$ for primal and dual orthogonal sketching.
\begin{figure}
\begin{subfigure}{.5\textwidth}
\includegraphics[scale=0.5]{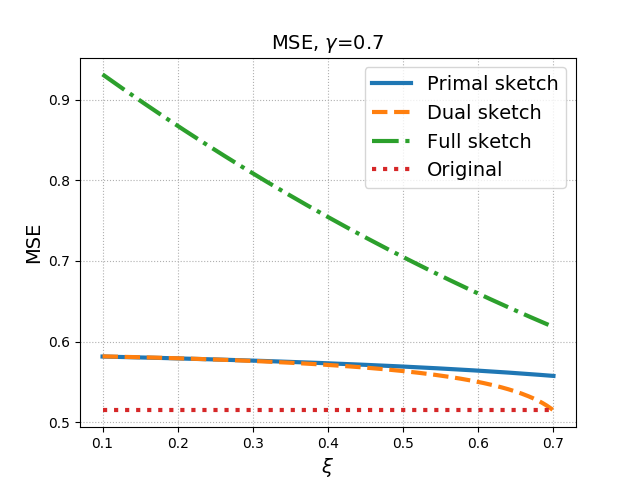}
\end{subfigure}
\begin{subfigure}{.5\textwidth}
\includegraphics[scale=0.5]{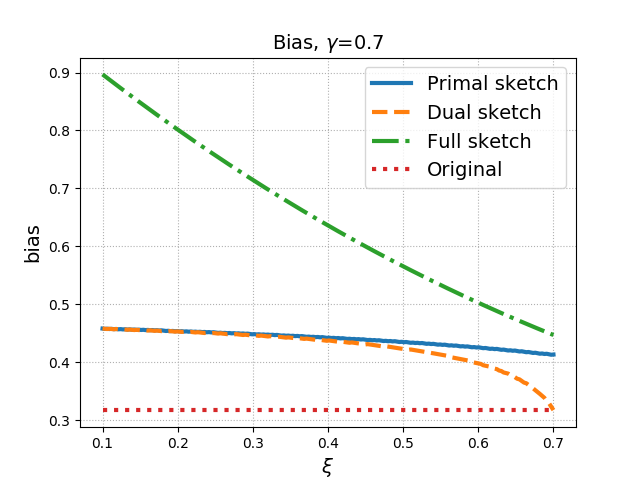}
\end{subfigure}
\begin{subfigure}{.5\textwidth}
\includegraphics[scale=0.5]{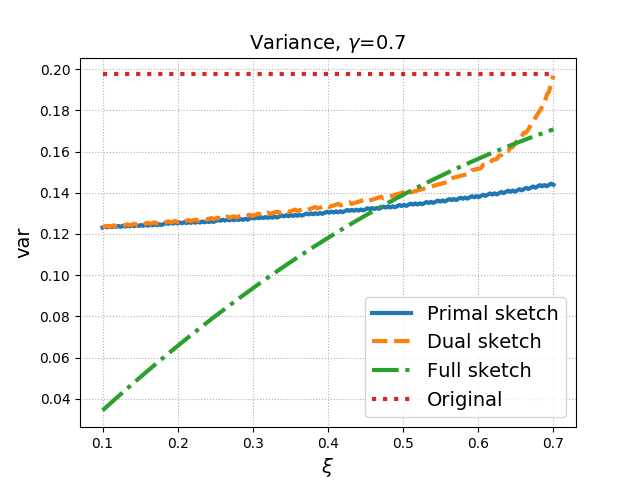}
\end{subfigure}
\begin{subfigure}{.5\textwidth}
\includegraphics[scale=0.5]{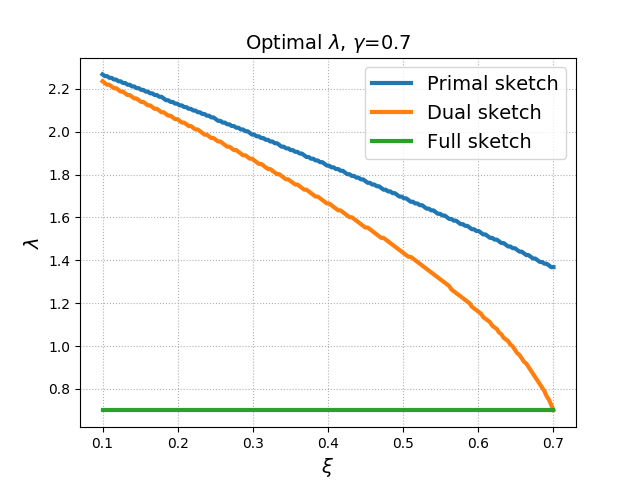}
\end{subfigure}
\caption{Primal and dual sketch at optimal $\lambda$. We take $\gamma=0.7$ and let $\xi$ range between 0.001 and 1, where for primal sketch $\xi=r/n$ while for dual sketch $\xi=d/p$.}
\label{Fig: primal_dual_optimal_lambda}
\end{figure}
Then we use the optimal regularization parameter for all settings, see Figure \ref{Fig: primal_dual_optimal_lambda}.  Both primal and dual sketch increase the bias, but decrease the variance. It is interesting to note that, for equal parameters $\xi$ and $\zeta$, and in our particular case, dual sketch has smaller variance, but larger bias. So primal sketch is preferred bias or MSE is important, but dual sketch is more desired when one wants smaller variance. All in all, dual sketch has larger MSE than primal sketch in the current setting. It can also be seen that in this specific example, the optimal $\lambda$ for primal sketch is smaller than that of dual sketch. However these results are hard to interpret, because there is no natural correspondence between the two parameters $\xi$ and $\zeta$.

\subsection{Computational complexity}
\label{comp}
Since sketching is a method to reduce computational complexity, it is important to discuss how much computational efficiency we gain. Recall our three estimators 
\begin{align*}
\hat\beta&=\left(X^\top X/n+\lambda I_p\right)^{-1}X^\top Y/n=n^{-1}X^\top\left(XX^\top/n+\lambda I_n\right)^{-1}Y,\\
\hat\beta_p&=\left(X^\top L^\top LX/n+\lambda I_p\right)^{-1}X^\top Y/n,\\
\hat\beta_d&=n^{-1}X^\top\left(XRR^\top X^\top/n+\lambda I_n\right)^{-1}Y,
\end{align*}
Their computational complexity, when computed in the usual way, is:
\begin{itemize}
	\item No sketch (Standard ridge): if $p<n$, computing $X^\top Y$ and $X^\top X$ requires $O(np)$ and $O(np^2)$ flops, then solving the linear equation
	$
	(X^\top X/n+\lambda I_p)\hat\beta=X^\top Y/n
	$
	requires $O(p^3)$ flops by the LU decomposition. It is $O(np^2)$ flops in total.

	If $p>n$, we use the second formula for $\hat\beta$, and the total flops is $O(pn^2)$.

	\item Primal sketch: for the Hadamard sketch (and other sketches based on the FFT), computing $LX$ by FFT requires $mp\log n$, computing $(LX)^\top LX$ requires $mp^2$, so the total flops is $O(p^3+mp(\log n+p))$. So the primal sketch can reduce the computation cost only when $p<n$.

	\item Dual sketch: computing $XRR^\top X^\top$ requires $nd$ $(\log p+n)$ flops by FFT, solving $(XRR^\top X^\top/n+\lambda I_n)^{-1}$ $Y$ requires $O(n^3)$ flops, the matrix-vector multiplication of $X^\top$ and $(XRR^\top X^\top/n+\lambda I_n)^{-1}Y$ requires $O(np)$ flops, so the total flops is $O(n^3+nd(\log p+n))$. Dual sketching can reduce the computation cost only when $p>n$.
\end{itemize}
\end{document}